\newtheorem{Theorem}{Theorem}[section]
\newtheorem{Lemma}[Theorem]{Lemma}
\newtheorem{Corollary}[Theorem]{Corollary}
\newtheorem{Open Problem}[Theorem]{Open Problem}
\newcommand{\mysection}[1]{\section{#1}\setcounter{equation}{0}}
\begin{document}

\title{Multiple nonsymmetric nodal solutions for quasilinear Schr\"{o}dinger system }
\author{Jianqing Chen$^{1,2}$\ and Qian Zhang$^{3,}$\thanks{Corresponding author. Email:\ zhangqianmath@163.com}\\
\small  \  1.\ School of Mathematics and Statistics, Fujian Normal University, \\
\small  Qishan Campus, Fuzhou 350117, P. R. China\\
\small  \  2.\ FJKLMAA and Center for Applied Mathematics of Fujian Province(FJNU), \\
\small  Fuzhou, 350117, P. R. China\\
\small  \  3.\  Department of Mathematical Sciences, Tsinghua University, Beijing 100084, China\\
\small  jqchen@fjnu.edu.cn (J. Chen) \ \   zhangqianmath@163.com  (Q. Zhang) }

\date{}

\maketitle
\noindent {\bf Abstract}: In this paper, we consider the quasilinear Schr\"{o}dinger system in $\mathbb R^{N}$($N\geq3$):
$$\left\{\aligned &-\Delta u+ A(x)u-\frac{1}{2}\triangle(u^{2})u=\frac{2\alpha }{\alpha+\beta}|u|^{\alpha-2}u|v|^{\beta},\\
&-\Delta v+ Bv-\frac{1}{2}\triangle(v^{2})v=\frac{2\beta }{\alpha+\beta}|u|^{\alpha}|v|^{\beta-2}v,\endaligned\right. $$
where $\alpha,\beta>1$, 
$2<\alpha+\beta<\frac{4N}{N-2}$, 
$B>0$ is a constant. By using a constrained minimization on Nehari-Poho\v{z}aev set, for any given integer $s\geq2$, we construct a non-radially symmetrical nodal solution with its $2s$ nodal domains.

\medskip

\noindent {\bf Keywords:} Quasilinear Schr\"{o}dinger system; Nehari-Poho\v{z}aev set; Non-radially  symmetrical nodal solutions

\medskip
\noindent {\bf Mathematics Subject Classification}:  35J05, 35J20, 35J60

\mysection {Introduction}

We study the following  quasilinear Schr\"{o}dinger system
\begin{equation}\label{eq1.1}
\left\{\aligned &-\Delta u+A(x)u-\frac{1}{2}\triangle(u^{2})u=\frac{2\alpha }{\alpha+\beta}|u|^{\alpha-2}u|v|^{\beta},\\
&-\Delta v+B v-\frac{1}{2}\triangle(v^{2})v=\frac{2\beta }{\alpha+\beta}|u|^{\alpha}|v|^{\beta-2}v,\endaligned\right.
\end{equation}
where $u(x)\to 0,\ \ v(x)\to 0$ as $|x|\to\infty$, $N\geq 3$, $u:=u(x),v:=v(x)$ 
be real valued functions on $\mathbb R^{N}$, $\alpha,\ \beta>1$, $2<\alpha+\beta<\frac{4N}{N-2}$,  $B>0$ is a constant. In the last two decades, much attention has been devoted to the quasilinear Schr\"{o}dinger equation of the form
\begin{equation}\label{eq12}
-\Delta u+V(x)u-\frac{1}{2}u\Delta(u^{2})=|u|^{p-2}u,\ x\in \mathbb R^{N}.
\end{equation}
The equation (\ref{eq12}) is related  to the existence of standing waves of the following quasilinear Schr\"{o}dinger equation
\begin{equation}\label{eq13}
i\partial_{t}z=-\Delta z+V(x)z-l(|z|^{2})z-\frac{1}{2}\Delta g(|z|^{2})g'(|z|^{2})z,\ x\in \mathbb R^{N},
\end{equation}
where $V$ is a given potential, $l$ and $g$ are real functions. The equation (\ref{eq13}) has been used as models in several areas of physics corresponding to various types of $g$. The superfluid film equation in plasma physics has this structure for $g(s)=s$ \cite{k}. In the case $g(s)=(1+s)^{\frac{1}{2}}$, the equation (\ref{eq13}) models the self-channeling of a high-power ultra short laser in matter \cite{r}. The equation (\ref{eq13}) also appears in fluid mechanics \cite{k,lss}, in the theory of Heidelberg ferromagnetism and magnus \cite{ltz}, in dissipative quantum mechanics and in condensed matter theory \cite{mf}.  When considering the case $g(s)= s$, one obtains a corresponding equation of elliptic type like (\ref{eq12}). For more detailed  mathematical and physical interpretation of equations like (\ref{eq12}), we refer to \cite{bmmlb,bl,cs,lw,psw,0s} and the references therein.

In recent years, there has been increasing interest in studying problem (\ref{eq12}), see for examples, \cite{ctc,cl,jlw,m,ma,wz,w} and the references therein. More precisely, by Mountain-Pass theorem and the principle of symmetric criticality, Severo \cite{s} obtained symmetric and nonsymmetric solutions for  quasilinear Schr\"{o}dinger equation (\ref{eq12}). In \cite{lww}, when $4\leq p<\frac{4N}{N-2}$, Liu, Wang and Wang established  the existence results of a positive ground
state solution and a sign-changing ground state solution were given by using the Nehari method for (\ref{eq12}). Based on the method of perturbation and invariant sets of descending flow, Zhang and Liu \cite{zl} studied the nonautonomous case of (\ref{eq12}), they obtained the existence of infinitely many sign-changing solutions for $4< p<\frac{4N}{N-2} $. With the help of Nehari method and change of variables, Deng, Peng and Wang \cite{dpw} considered
\begin{equation}\label{eq1.4}
-\Delta u+V(x)u- u\Delta(u^{2})=\lambda|u|^{p-2}u+|u|^{\frac{4N}{N-2}-2}u,\ x\in \mathbb R^{N},
\end{equation}
and proved that (\ref{eq1.4}) has at least one pair of $k$-node solutions if either $N\geq6$ and   $4< p<\frac{4N}{N-2} $ or $3\leq N < 6$ and $ \frac{2(N+2)}{N-2} <q< \frac{4N}{N-2}$. In addition, problem (\ref{eq1.4}) still has at least one pair of $k$-node solutions if $3\leq N <6$ , $4 <q \leq \frac{2(N+2)}{N-2}$ and $\lambda $ sufﬁciently large. Note that all sign-changing solutions obtained in \cite{dpw,lww,zl}  are only valid for $4< p<\frac{4N}{N-2}$. When $2<p<\frac{4N}{N-2}$, Ruiz and Siciliano \cite{rs} showed equation (\ref{eq12}) has a ground states solution via Nehari-Poho\v{z}aev type constraint and concentration-compactness lemma, Wu and Wu \cite{ww} obtained the existence of radial solutions for (\ref{eq12}) by using change of variables.

It is natural to pose a series of interesting questions: whether we 
can find an unified approach to obtain sign-changing solutions 
for the full subcritical range of $2<\alpha+\beta<\frac{4N}{N-2}$?
Further, whether we can extend these results to  quasilinear 
Schr\"{o}dinger system? To answer these two questions, we adopt an action of finite subgroup $G$ of $O(2)$ from Szulkin and Waliullah \cite{sw} and look for the existence of non-radially symmetrical nodal solutions for quasilinear Schr\"{o}dinger system (\ref{eq1.1}).

Before stating our main results, we make the following assumptions:

\noindent$ (A_{1})$ $A\in C^{1}(\mathbb R^{N},\mathbb R^{+}), 0<A_{0}\leq A(x)\leq A_{\infty}=\lim\limits_{|x|\rightarrow\infty}A(x)<+\infty;$\\
$(A_{2})$ $  \nabla A(x)\cdot x \in L^{\infty}(\mathbb R^{N})$, $(\alpha+\beta-2)A(x)-\nabla A(x)\cdot x\geq0$; \\
$(A_{3})$ the map $s\mapsto s^{\frac{N+2}{N+\alpha+\beta}}A(s^{\frac{1}{N+\alpha+\beta}}x)$ is concave for any $x\in \mathbb R^{N}$;\\
$(A_{4})$  $A(x)$ is radially symmetric with respect to the first two coordinates, that is to say, if
$(x_{1},x_{2},x_{3},\cdots,x_{N})$, $(y_{1},y_{2},y_{3},\cdots,y_{N} )\in\mathbb R^{N}$ and  $x^2_1 + x^2_2  =  y^2_1 + y^2_2$,  then $A(x_{1},x_{2},z_{3},\cdots,z_{N} ) = A(y_{1},y_{2},z_{3},\cdots,z_{N})$.

It is worth noting that  $(A_{1})$ is used to derive the existence of a 
strongly convergent subsequence, while for the system, we only need one 
such kind of condition in our system, which seems to be a different phenomenon due to the coupling of $u$ and $v$. $(A_{2})$-$(A_{3})$ once appeared in \cite{ rs,ww} to obtain the existence of ground states solutions for the quasilinear Schr\"{o}dinger equation. $(A_{4})$ once appeared in \cite{cyh} to prove the existence of nodal solutions of single $p$-Laplacian equation.

 Our main results  reads as follows.
\begin{Theorem}\label{th1.1}
Assume that $(A_{1})$-$(A_{4})$ hold. For any given integer $s\geq2$, the problem (\ref{eq1.1}) possesses  a non-radially symmetrical nodal solution with its $2s$ nodal domains.
\end{Theorem}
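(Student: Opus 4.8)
The plan is to produce the solution as a constrained minimizer of the energy on the Nehari--Poho\v{z}aev set, restricted to a subspace of functions carrying a $2s$-fold ``sign--twisted'' rotational symmetry: the symmetry simultaneously forces the minimizer to be non--radial and to split $\R^N$ into $2s$ nodal regions, while the Nehari--Poho\v{z}aev constraint is what makes the variational scheme work in the whole subcritical range $2<\alpha+\beta<\tfrac{4N}{N-2}$.

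\emph{Variational reformulation.} Because of the terms $\Delta(u^2)u$ and $\Delta(v^2)v$, the natural energy of (\ref{eq1.1}) is not $C^1$ on $H^1(\R^N)\times H^1(\R^N)$. I would first remove them by the familiar change of variables: let $f$ be the odd increasing diffeomorphism of $\R$ with $f(0)=0$ and $f'(t)=(1+2f^2(t))^{-1/2}$, and set $u=f(w)$, $v=f(z)$. Then (\ref{eq1.1}) becomes a semilinear system with energy
\[
\Phi(w,z)=\tfrac12\!\int_{\R^N}\!\big(|\nabla w|^2+|\nabla z|^2\big)+\tfrac12\!\int_{\R^N}\!\big(A(x)f^2(w)+Bf^2(z)\big)-\tfrac{2}{\alpha+\beta}\!\int_{\R^N}\!|f(w)|^{\alpha}|f(z)|^{\beta},
\]
which is of class $C^1$ on $E:=H^1(\R^N)\times H^1(\R^N)$, since $f^2$ grows linearly and $\tfrac{\alpha+\beta}{2}<\tfrac{2N}{N-2}$ makes the coupling subcritical. (One can instead keep the quasilinear energy on $\{(u,v)\in E:\int(u^2|\nabla u|^2+v^2|\nabla v|^2)<\infty\}$, as in \cite{rs}.) As $f$ is a bijection fixing $0$, critical points of $\Phi$ yield weak solutions of (\ref{eq1.1}) with the same nodal sets.

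\emph{The symmetric subspace.} Fix $s\ge2$. Following \cite{sw}, I take a finite subgroup $G\subset O(2)$ acting on $\R^N=\R^2\times\R^{N-2}$ through the first two coordinates, together with a nontrivial character $\chi:G\to\{\pm1\}$, and let $G$ act on $E$ by $g\cdot(w,z):=(\chi(g)\,w(g^{-1}\cdot),\ \chi(g)\,z(g^{-1}\cdot))$. By $(A_4)$ the map $A$ is $G$--invariant, and since $f$ is odd while $f^2$ is even, $\Phi$ is invariant under this action. The fixed--point subspace $E_G$ is chosen so that every nonzero $(w,z)\in E_G$ vanishes on a union of $s$ hyperplanes through the axis $\{x_1=x_2=0\}$ whose complement has exactly $2s$ connected sectors $\Omega_0,\dots,\Omega_{2s-1}$, with $w$ and $z$ changing sign across each separating hyperplane; in particular such a $(w,z)\ne0$ is non--radial and has at least $2s$ nodal domains.

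\emph{Nehari--Poho\v{z}aev minimization and compactness.} Using the scaling $(w,z)\mapsto(w^\sigma,z^\sigma)$ with $w^\sigma(x)=\sigma^{a}w(\sigma^{-b}x)$ and $a,b>0$ adapted to $(A_3)$, I define $\mathcal M=\{(w,z)\in E\setminus\{0\}:\ \tfrac{d}{d\sigma}\Phi(w^\sigma,z^\sigma)|_{\sigma=1}=0\}$. Assumptions $(A_2)$--$(A_3)$ are tailored so that (i) each scaling ray meets $\mathcal M$ at a unique point, which maximizes $\sigma\mapsto\Phi(w^\sigma,z^\sigma)$; (ii) $\mathcal M$ is a $C^1$ manifold and $\Phi|_{\mathcal M}$ is coercive and bounded below; (iii) $\mathcal M$ is a natural constraint. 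Since $G$ commutes with the scaling and preserves $\mathcal M$, the level $c_G:=\inf_{\mathcal M\cap E_G}\Phi$ is well defined and positive; a minimizing sequence $(w_n,z_n)\subset\mathcal M\cap E_G$ is bounded by (ii), so $(w_n,z_n)\rightharpoonup(w_0,z_0)$ in $E_G$, and the decisive point is strong convergence. Here $(A_1)$ enters: as the finite rotational symmetry does not compactify the last $N-2$ variables, I would run a concentration--compactness analysis and use the symmetry to show that escaping mass must either split into at least $2s$ congruent bumps (if it leaves along $\R^2$) or form a single $G$--symmetric profile solving the problem at infinity (if it leaves along $\R^{N-2}$), and then exclude both via the strict inequality $c_G<c_G^{\infty}$, which is where $A_0\le A(x)\le A_\infty$ is used; because the coupling $|f(w)|^{\alpha}|f(z)|^{\beta}$ locks the two components together, this single hypothesis on $A$ suffices and no analogue for $B$ is needed. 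Hence $(w_0,z_0)\in\mathcal M\cap E_G$ attains $c_G$. I expect this compactness step to be the main obstacle: neither the finite symmetry nor $(A_1)$ alone restores compactness, and the strict level comparison $c_G<c_G^{\infty}$ has to be established by constructing a test configuration in $E_G\cap\mathcal M$ with energy below $c_G^\infty$, which is delicate in this coupled, quasilinear, low--exponent regime where neither the Nehari nor the Poho\v{z}aev identity alone controls the sequence.

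\emph{Conclusion and nodal count.} By (iii), the principle of symmetric criticality, and the uniqueness in (i), the minimizer $(w_0,z_0)$ is a free critical point of $\Phi$, so $(u_0,v_0):=(f(w_0),f(z_0))$ is a weak solution of (\ref{eq1.1}) in the symmetric class; in particular it is non--radial and has at least $2s$ nodal domains. To upgrade this to \emph{exactly} $2s$, I would argue by contradiction: an extra nodal component of $w_0$ (or $z_0$) strictly inside some sector $\Omega_j$ would allow, by the now--classical cut--and--paste argument for least--energy nodal solutions adapted to the $G$--symmetric class (modify the pair in the fundamental domain, propagate the modification to all sectors by the $G$--action, then re--project onto $\mathcal M\cap E_G$ via (i)), the construction of a competitor with energy strictly below $c_G$ — a contradiction. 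Therefore $(u_0,v_0)$ is the desired non--radially symmetric nodal solution with exactly $2s$ nodal domains. A secondary difficulty, worth isolating, is verifying that the Nehari--Poho\v{z}aev set is a natural constraint for the \emph{system} rather than a scalar equation; this is precisely the role played by $(A_2)$--$(A_3)$.
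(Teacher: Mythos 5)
Your overall strategy coincides with the paper's (minimize the energy on a Nehari--Poho\v{z}aev set inside the sign-twisted $G_s$-symmetric class, so that the symmetry forces non-radiality and the sectors give the nodal structure), but your compactness plan has a genuine gap. You propose to exclude loss of compactness along the last $N-2$ variables by a strict level inequality $c_G<c_G^{\infty}$ against a problem at infinity. Under $(A_1)$--$(A_4)$ alone this inequality is not available: $(A_1)$ only gives $A(x)\leq A_\infty$, hence $c_G\leq c_G^{\infty}$, and in the case of constant $A$ --- which satisfies all of $(A_1)$--$(A_4)$ and is precisely the content of Corollary \ref{co12} --- one has $c_G=c_G^{\infty}$, so the exclusion argument collapses exactly where the theorem is supposed to apply. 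The paper never compares with a limit problem. Instead it (i) proves a symmetric variant of Lions' lemma (Lemma \ref{le2.70}): if the concentration points escape in the $(y_1,y_2)$-plane, their orbit under the rotation group produces arbitrarily many disjoint balls each carrying a fixed amount of $L^{q}$ mass, contradicting boundedness; consequently only translations along the remaining $N-2$ coordinates are needed to recenter the minimizing sequence, and these preserve the symmetric class and (by the invocation of $(A_4)$) the energy; and (ii) shows that the recentered weak limit lies on $\mathcal{M}$ by a dichotomy on the sign of $\mathcal{G}(u,v)$, combining the Br\'ezis--Lieb splitting of the coupling term (Lemma \ref{le3.2}), the Poppenberg--Schmitt--Wang splitting of $\int_{\mathbb R^N}u^2|\nabla u|^2$ (Lemma \ref{le33}), Fatou's lemma, and the fact that by $(A_2)$ every term of $I-\frac{1}{N+\alpha+\beta}\mathcal{G}$ is nonnegative and weakly lower semicontinuous; strong convergence then follows with no energy comparison at infinity at all. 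You should replace the $c_G<c_G^{\infty}$ step by this recentering-plus-Fatou mechanism, otherwise the constant-potential and ``$A$ attains $A_\infty$'' cases are out of reach.

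Two further points. First, your primary route through the change of variables $u=f(w)$ sits badly with the Nehari--Poho\v{z}aev scheme: $(A_3)$ is tailored to the scaling $u_t(x)=tu(t^{-1}x)$ in the original variables, where the substitution $s=t^{N+\alpha+\beta}$ makes the fibering map concave and yields the unique projection onto $\mathcal{M}$ (Lemma \ref{le2.2}); after composing with the nonlinear $f$, the fibering map in $(w,z)$ loses this homogeneous structure, so uniqueness of the projection and the natural-constraint property do not transfer. The paper avoids both the change of variables and any manifold/Lagrange-multiplier structure by working with the nonsmooth functional on $X$ and using a Ruiz--Siciliano-type deformation along the curves $\gamma_1,\gamma_2$ (mean value theorem plus the intermediate value theorem for $\mathcal{G}$) to show the constrained minimizer is a genuine critical point; this is what makes the full range $2<\alpha+\beta<\frac{4N}{N-2}$ accessible. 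Second, your claimed upgrade to exactly $2s$ nodal domains goes beyond what the paper establishes (its proof concludes only that there are at least $2s$ nodal domains, one per sector), and the cut-and-paste argument you invoke does not adapt readily to a single scaling constraint of Poho\v{z}aev type: the scaling $u_t$ dilates the whole space and neither $I$ nor $\mathcal{G}$ decomposes additively over nodal components, so that step is unsupported as stated.
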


\begin{Corollary}\label{co12}
If $A(x)$ is a positive constant, one can still obtain the same results as  Theorem \ref{th1.1} for system (\ref{eq1.1}).
\end{Corollary}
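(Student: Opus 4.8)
I would realize the solution as a constrained minimizer of the energy on a Nehari--Poho\v{z}aev set inside a symmetric subspace whose elements are automatically non-radial and carry exactly $2s$ nodal domains.

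\emph{Variational framework.} Since the quasilinear terms in \eqref{eq1.1} are decoupled, put $u=f(w)$, $v=f(z)$, where $f$ is the standard increasing diffeomorphism with $f'(t)=(1+2f^2(t))^{-1/2}$, $f(0)=0$; this transforms \eqref{eq1.1} into a system with $C^1$ energy
$$\Phi(w,z)=\frac12\int_{\R^N}\bigl(|\nabla w|^2+|\nabla z|^2\bigr)+\frac12\int_{\R^N}\bigl(A(x)f^2(w)+Bf^2(z)\bigr)-\frac{2}{\alpha+\beta}\int_{\R^N}|f(w)|^{\alpha}|f(z)|^{\beta}$$
on $E:=H^1(\R^N)\times H^1(\R^N)$, whose weak solutions give solutions of \eqref{eq1.1} (alternatively one may work directly with the non-smooth functional on $H^1\cap L^\infty$). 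Because the effective homogeneity $(\alpha+\beta)/2$ of the coupling term can be $\leq 2$, the plain Nehari manifold is not usable; instead I would use the Nehari--Poho\v{z}aev set $\mathcal M=\{(w,z)\in E\setminus\{0\}:\mathcal J(w,z)=0\}$, where $\mathcal J$ is the linear combination of the Nehari functional $\langle\Phi'(w,z),(w,z)\rangle$ and the Poho\v{z}aev functional obtained from the dilation $(w,z)\mapsto(w(\cdot/\lambda),z(\cdot/\lambda))$ for which $\Phi-\gamma^{-1}\mathcal J$ is a sum of nonnegative terms. Conditions $(A_2)$ and $(A_3)$ are exactly what make $\lambda\mapsto\Phi(w(\cdot/\lambda),z(\cdot/\lambda))$ possess a unique critical point, which is a strict maximum, so that $\mathcal M$ is a natural constraint bounded away from $0$ and $\Phi|_{\mathcal M}$ is coercive; only $A$, not the constant $B$, needs this structure.

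\emph{Symmetric subspace and nodal structure.} Write $x=(x',x'')\in\R^2\times\R^{N-2}$ and let $R_\theta$ be the rotation by $\theta$ in the $x'$-plane. Following the group action of \cite{sw}, for fixed $s\geq2$ define
$$H_s=\{(w,z)\in E:\ w,z\ \text{are $O(N-2)$-invariant in }x'',\ \text{invariant under a fixed reflection, and}\ w(R_{\pi/s}x)=-w(x),\ z(R_{\pi/s}x)=-z(x)\}.$$
By $(A_4)$ both $\Phi$ and $\mathcal M$ are invariant under the corresponding finite group $G_s\subset O(2)\times O(N-2)$, so Palais' principle of symmetric criticality applies: a critical point of $\Phi|_{\mathcal M\cap H_s}$ is a genuine critical point of $\Phi$ on $\mathcal M$, hence, $\mathcal M$ being a natural constraint, a weak solution. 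Every nonzero $(w,z)\in H_s$ changes sign across each of the $2s$ congruent sectors $\{\,(j-1)\pi/s<\arg x'<j\pi/s\,\}$ and therefore cannot be radial; once one verifies that the minimizer does not vanish in the interior of a sector, its nodal domains are precisely these $2s$ sectors, and the same holds for $(u,v)=(f(w),f(z))$.

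\emph{Minimization and compactness.} Put $c_s=\inf_{\mathcal M\cap H_s}\Phi$. Any nonzero $(w,z)\in H_s$ is carried onto $\mathcal M$ by the unique admissible dilation, which preserves $H_s$, so $\mathcal M\cap H_s\neq\emptyset$; the coercive representation of $\Phi|_{\mathcal M}$ together with $(A_2)$ gives $c_s>0$ and the boundedness of a minimizing sequence $(w_n,z_n)\subset\mathcal M\cap H_s$, whence $(w_n,z_n)\rightharpoonup(w_0,z_0)$ in $E$ up to a subsequence. It then remains to upgrade this to convergence strong enough in the $L^{\alpha+\beta}$-type norm that controls $\int|f(w)|^\alpha|f(z)|^\beta$ to pass to the limit in $\Phi$ and $\mathcal J$. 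This is where $H_s$ and $(A_1)$ enter: the largeness of the $G_s$-orbits at infinity gives compact embeddings in the $x''$-directions, while $(A_1)$, through the comparison of $c_s$ with the level $c_s^\infty$ of the limit problem at infinity (with $A$ replaced by $A_\infty$) and the strict inequality $c_s<c_s^\infty$ obtained from a suitably translated symmetric test configuration, rules out escape of mass in the remaining directions; vanishing is excluded since $c_s>0$. Hence $(w_0,z_0)\in\mathcal M\cap H_s$ is a minimizer, thus a weak solution, and $(u,v)=(f(w_0),f(z_0))$ is the claimed non-radial nodal solution of \eqref{eq1.1} with $2s$ nodal domains. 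Since a positive constant satisfies $(A_1)$--$(A_4)$, Corollary \ref{co12} follows from the same scheme.

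\emph{Main difficulty.} The heart of the matter is the last step. The problem lives on all of $\R^N$, so no compactness is free; the transformed nonlinearity $f^2$ is only asymptotically quadratic and behaves badly under dilations, which is precisely why the constraint has to be the Nehari--Poho\v{z}aev set rather than the Nehari manifold; and since only a finite rotation group is available, one cannot get compactness from the symmetry alone, so it must be combined with the energy comparison coming from $(A_1)$. Carrying the nonlocal quantity $\int|f(w)|^\alpha|f(z)|^\beta$ through this combined argument --- and, separately, proving that the minimizer has no spurious interior zeros so that the nodal count is exactly $2s$ --- are the delicate points.
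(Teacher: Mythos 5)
The statement itself needs only one line in the paper's framework: a positive constant $A$ satisfies $(A_1)$--$(A_4)$ automatically (in particular $\nabla A(x)\cdot x\equiv 0$, and $s\mapsto s^{\frac{N+2}{N+\alpha+\beta}}A$ is concave because the exponent lies in $(0,1)$), so Theorem \ref{th1.1} applies verbatim. You do say this in your last sentence, but you embed it in a re-proof of Theorem \ref{th1.1} by a different scheme, and that scheme has two genuine gaps. First, the compactness mechanism you propose is exactly the one that breaks in the case of the Corollary: you want to rule out loss of mass by a strict inequality $c_s<c_s^\infty$ between the symmetric level and the level of the problem at infinity, but when $A$ is a constant the problem at infinity \emph{is} the original problem, so $c_s=c_s^\infty$ and no strict inequality is available. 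What actually saves compactness here is translation invariance in the last $N-2$ variables: the finite rotation group acts only in the $(x_1,x_2)$-plane, so (as in the paper's Lemma \ref{le2.70}) disjointness of the $G_s$-orbit balls forces the concentration points to have bounded first two coordinates, and then one recenters the minimizing sequence by a translation in the remaining $x''$-directions, which is legitimate precisely because $A$ is constant (this is the paper's Step 2). Note also that you have the roles of the directions reversed: the symmetry gives nothing in the $x''$-directions; it controls the $x'$-directions.

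Second, your variational setup is not substantiated at the point where the paper's hypotheses do real work. After the change of variables the functional loses the clean scaling structure, and you build the Nehari--Poho\v{z}aev set from the pure dilation $(w,z)\mapsto(w(\cdot/\lambda),z(\cdot/\lambda))$; under that dilation the potential term and the coupling term scale with the \emph{same} power $\lambda^N$, so the fibering map need not have a critical point at all, and the claim that $(A_2)$--$(A_3)$ give a unique maximum is unsupported. The paper avoids this by not transforming the equation: it works with $I$ on the metric space $X$ and the combined amplitude--space scaling $u_t(x)=tu(t^{-1}x)$, for which the three terms scale like $t^N$, $t^{N+2}$, $t^{N+\alpha+\beta}$ and $(A_3)$ yields concavity in $s=t^{N+\alpha+\beta}$, hence uniqueness of the projection onto $\mathcal{M}$ (Lemma \ref{le2.2}); the ``natural constraint'' issue is then settled not by Palais' principle for a smooth functional but by the explicit deformation along $t\mapsto(\tilde u_t,\tilde v_t)$ with symmetric test functions in $C_0^\infty\cap\mathcal{V}$ at the end of Section 3. (Minor points: for the term $\tfrac12\Delta(u^2)u$ the correct change of variables is $f'=(1+f^2)^{-1/2}$, not $(1+2f^2)^{-1/2}$; and the count of $2s$ nodal domains in the paper comes from the determinant action forcing antisymmetry, hence vanishing, across the $s$ reflection lines, which your rotation-antiperiodicity alone does not give.) If you simply verify $(A_1)$--$(A_4)$ for constant $A$ and cite Theorem \ref{th1.1}, the Corollary is proved; as written, your standalone argument would not go through.
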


\noindent {\bf Remark 1.3.}  Since $s\in \mathbb N$ is arbitrary, the solution we obtained in Theorem \ref{th1.1} is actually a result of multiplicity.
\vskip4pt
\noindent {\bf Remark 1.4.}  As a main novelty with respect to some results 
in \cite{dpw,lww,zl}, we are able to deal with exponents 
$\alpha+\beta\in(2,\frac{4N}{N-2})$ and obtain the existence and multiplicity 
of nodal solution.
\vskip4pt

The rest of the paper is organized as follows. In Section 2, we establish some preliminary results. Theorem \ref{th1.1} is proved in Section 3.

\mysection {Preliminaries}

Throughout this paper, $\|u\|_{H^1}$ and $|u|_r$ denote the usual 
norms of $H^1(\mathbb R^N)$ and $L^r(\mathbb R^N)$ for $r>1$, 
respectively. $C$ and $C_{i}(i=1,2,\ldots)$ denote 
(possibly different) positive constants and $\int_{\mathbb R^{N}}g$ denotes the integral $\int_{\mathbb R^{N}}g(z)dz$. The $\rightarrow$ and $\rightharpoonup$ denote strong convergence and weak convergence, respectively.

Let $H^{1}(\mathbb R^{N})$ be the usual Sobolev space, define $X :=\ H\times H$
with  $H := \{ u \in H^{1}(\mathbb R^{N})\   | \ \int_{\mathbb R^{N}}u^{2}|\nabla u|^{2}<+\infty  \}.$
The term $\int_{\mathbb R^{N}}u^{2}|\nabla u|^{2}$ is not convex
 and $H$ is not even a vector space. So, the usual min-max 
 techniques cannot be directly applied, nevertheless $H$ is a
  complete metric space with distance
$$ d_{H}(u,\omega)=\|u-\omega\|_{H^{1}}+|\nabla u^{2}-\nabla \omega^{2}|_{2}.  $$
Define
$$\aligned d_{X}\bigl((u,v),(\omega,\nu)\bigr)
&:=\|u-\omega\|_{H^{1}}+|\nabla u^{2}-\nabla \omega^{2}|_{2}+\|v-\nu\|_{H^{1}}\\
&\ \ \ +|\nabla v^{2}-\nabla \nu^{2}|_{2}. \endaligned$$
Then we call $(u,v)\in X$ is a weak solution of (\ref{eq1.1}) if for any $\varphi_{1},\varphi_{2}\in C_{0}^{\infty}(\mathbb R^{N})$,
$$\aligned &\int_{\mathbb R^{N}}\bigg((1+u^{2})\nabla u\nabla \varphi_{1}+\left(u|\nabla u|^{2}+A(x)u-\frac{2\alpha|u|^{\alpha-2}u|v|^{\beta}}{\alpha+\beta}\right)\varphi_{1}\bigg)=0,\endaligned$$
and
$$\aligned &\int_{\mathbb R^{N}}\bigg((1+v^{2})\nabla v\nabla \varphi_{2}+\left(v|\nabla v|^{2}+B v-\frac{2\beta|u|^{\alpha}|v|^{\beta-2}v}{\alpha+\beta}\right)\varphi_{2}\bigg)=0.\endaligned$$
Hence there is a one-to-one correspondence between solutions of (\ref{eq1.1}) and critical points of the following  functional $I: X \rightarrow\mathbb R$ defined by
\begin{equation}\label{eq21}
\aligned I(u,v)&=\frac{1}{2}\int_{\mathbb R^{N}}(|\nabla u|^{2}+|\nabla v|^{2}+A(x)u^{2}+B v^{2})\\
&\ \ \ \ +\frac{1}{2}\int_{\mathbb R^{N}} (u^{2}|\nabla u|^{2}+v^{2}|\nabla v|^{2})-\frac{2}{\alpha+\beta}\int_{\mathbb R^{N}} |u|^{\alpha}|v|^{\beta} .\endaligned
\end{equation}
For any $\varphi_{1},\varphi_{2}\in C_{0}^{\infty}(\mathbb R^{N})$, $(u,v)\in X,$ and $(u,v)+(\varphi_{1},\varphi_{2})\in X$, we compute the Gateaux derivative
$$\aligned &\ \langle I'(u,v),(\varphi_{1},\varphi_{2})\rangle\\
=& \int_{\mathbb R^{N}}\bigl((1+u^{2})\nabla u\nabla \varphi_{1}+(1+v^{2})\nabla v\nabla \varphi_{2}+u|\nabla u|^{2}\varphi_{1}\\
&\ +v|\nabla v|^{2}\varphi_{2}+A(x)u\varphi_{1}+B v\varphi_{2}\bigr)-\frac{2\alpha}{\alpha+\beta}\int_{\mathbb R^{N}}|u|^{\alpha-2}u|v|^{\beta}\varphi_{1}\\
&\  -\frac{2\beta}{\alpha+\beta}\int_{\mathbb R^{N}}|v|^{\beta-2}v|u|^{\alpha}\varphi_{2}.\endaligned$$
Then, $(u,v)\in X $ is a solution of (\ref{eq1.1}) if and only if
$$\langle I'(u,v),(\varphi_{1},\varphi_{2})\rangle=0,\ \  \varphi_{1},\ \varphi_{2}\in C_{0}^{\infty}(\mathbb R^{N}) .$$

Motivated by \cite{sw}, we recall that a subset $U$ of a Banach space $\mathbb{E}$ is called invariant with respect to an action of a group $G$ (or $G$-invariant) if $gU\subset U$ for all $g\in G$, and a functional $I : U\rightarrow \mathbb R$ is invariant (or $G$-invariant) if $I(gu) = I(u)$ for all $g \in G$, $u\in U$. The subspace
$$\mathbb{E}_{G}:=\{u\in \mathbb{E}\ |\  gu=u\ \ \hbox{for all}\  g\in G\}$$ is called the fixed point space of this action.

Let $x =(y,z) = (y_{ 1} ,y_{ 2} ,z_{ 1} ,\cdots,z_{ N} ) 
\in\mathbb  R^{ N}$ and let $O(2)$ be the group of orthogonal 
transformations acting on $\mathbb R^{ 2}$ by $(g,y)\mapsto gy.$ 
For any positive integer $s$ we define $G_{s}$ to be the finite 
subgroup of $O(2)$ generated by the two elements $\alpha$ and 
$\beta$ in $O(2)$, where $\alpha$ is the rotation in the $y$-plane 
by the angle $\frac{2\pi}{s}$ and $\beta$ is the reflection in 
the line $y_{ 1} = 0 $ if $s= 2$, and in the line $y_{ 2} 
= \hbox{tan}\bigl(\frac{\pi}{s}\bigr)y_{ 1}$ for other $s$ (so in complex notation $w = y_{ 1} + iy_{ 2}$ , $\alpha w = we^{ \frac{2\pi i}{s}}$ and $\beta w = we^{ \frac{2\pi i}{s} }$ ).

$\forall\ g\in G_{ s}$, $x\in\mathbb R^{N},\ gx:=(gy,z)$. Define the action of $G_{s}$ on $H^{1}(\mathbb R^{N})$ by setting
$$ (g(u,v))x:=(gu,gv)x=(\det(g)ug^{-1}x,\det(g)vg^{-1}x).$$
Define
$$\mathcal{V}:=\{(u,v)\in X\ |\ (u,v)(gx)=(\det(g)u(x),\det(g)v(x)),\  \ g\in G_{s}\}, $$
$$\mathcal{M}:=\{(u,v)\in \mathcal{V}\backslash\{(0,0)\}\ |\ \mathcal{G}(u,v)=0 \},$$
where $\mathcal{G}:X\rightarrow \mathbb R$ and
$$ \aligned \mathcal{G}(u,v)&=\frac{N}{2} \int_{\mathbb R^{N}}(|\nabla u|^{2}+|\nabla v|^{2})+\frac{N+2}{2} \int_{\mathbb R^{N}}(A(x)u^{2} +Bv^{2} \\
&\ \ \ \ +u^{2}|\nabla u|^{2}+v^{2}|\nabla v|^{2})-\frac{2(N+\alpha+\beta)}{\alpha+\beta}\int_{\mathbb R^{N}}|u|^{\alpha}|v|^{\beta}. \endaligned $$
Let
\begin{equation}\label{eq2.2}
m:=\inf_{(u,v)\in \mathcal{M}}I(u,v).
\end{equation}
Then our aim is to prove that $m$ is achieved. In the rest of this section, we will give some properties of the set $\mathcal{M}$.

For any $u\in H^1(\mathbb R^N)$, we define $u_{t}:\mathbb R^{+}\rightarrow H^1(\mathbb R^N)$ by:
$$ u_{t}(x):=tu(t^{-1}x).$$
Let $t\in \mathbb R^{+}$ and $(u,v)\in X$. We have that
$$\aligned I(u_{t},v_{t})&=\frac{t^{N}}{2}\int_{\mathbb R^{N}}(|\nabla u|^{2}+|\nabla v|^{2})+\frac{t^{N+2}}{2}\int_{\mathbb R^{N}}(A(x)u^{2}+Bv^{2}\\
&\ \ \ \ +u^{2}|\nabla u|^{2}+ v^{2}|\nabla v|^{2}) -\frac{2t^{N+\alpha+\beta}}{\alpha+\beta}\int_{\mathbb R^{N}}|u|^{\alpha}|v|^{\beta}.\endaligned $$
Denote $h_{uv}(t):=I(u_{t},v_{t})$. Since $\alpha+\beta>2,$ we see that $h_{uv}(t)>0$ for $t>0$ small enough and $h_{uv}(t)\rightarrow -\infty$ as $t\rightarrow +\infty,$ this implies that $h_{uv}(t)$ attains its maximum. Moreover,  $h_{uv}(t): \mathbb R^{+}\rightarrow \mathbb R$ is $C^{1}$ and
$$\aligned h'_{uv}(t)
&=\frac{N}{2}t^{N-1}\int_{\mathbb R^{N}}(|\nabla u|^{2}+|\nabla v|^{2} )+\frac{N+2}{2}t^{N+1}\int_{\mathbb R^{N}}(A(x)u^{2} +Bv^{2} \\
&\ \ \ \ +u^{2}|\nabla u|^{2}+ v^{2}|\nabla v|^{2}) -\frac{2(N+\alpha+\beta)}{\alpha+\beta}t^{N+\alpha+\beta-1}\int_{\mathbb R^{N}}|u|^{\alpha}|v|^{\beta}.\endaligned$$

\begin{Lemma}\label{le2.1}
If $(u,v)\in X$ is a weak solution of (\ref{eq1.1}), then $(u,v)$ satisfies the following $ P(u,v)=0$, where
\begin{equation}\label{eq2.1}
\aligned P(u,v):&=\frac{N-2}{2}\int_{\mathbb R^{N}}(|\nabla u|^{2}+|\nabla v|^{2}+ u^{2}|\nabla u|^{2}+ v^{2}|\nabla v|^{2}) \\
&\ \ \ +\frac{N}{2}\int_{\mathbb R^{N}}(A(x)u^{2}+Bv^{2})+\frac{1}{2}\int_{\mathbb R^{N}}\nabla A(x)\cdot xu^{2}\\
&\ \ \ -\frac{2N}{\alpha+\beta}\int_{\mathbb R^{N}}|u|^{\alpha}|v|^{\beta}.\endaligned
\end{equation}
\end{Lemma}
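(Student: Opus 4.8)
The identity $P(u,v)=0$ is a Pohozaev (Rellich) identity: up to an overall sign, $P(u,v)$ is nothing but $\frac{d}{d\lambda}\big|_{\lambda=1}I\big(u(\cdot/\lambda),v(\cdot/\lambda)\big)$, which formally vanishes at a critical point. The plan is therefore to realise this derivative rigorously by multiplying the first equation of \eqref{eq1.1} by $x\cdot\nabla u$ and the second by $x\cdot\nabla v$, integrating over the ball $B_R:=\{|x|<R\}$, integrating by parts, adding the two identities, and letting $R\to\infty$ along a suitable sequence of radii.

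\textbf{Step 1 (regularity).} First I would upgrade the regularity of the weak solution so that the manipulations below are legitimate: performing the usual change of variables that desingularises the quasilinear term and then using interior elliptic estimates and a bootstrap, one obtains $u,v\in W^{2,p}_{\mathrm{loc}}(\mathbb R^N)\cap C^{1,\gamma}_{\mathrm{loc}}(\mathbb R^N)$, so that $u^2\Delta u$ and the cubic products $u|\nabla u|^2(x\cdot\nabla u)$ are locally integrable and the equations hold a.e. At the same time, $(u,v)\in X$ together with $(A_1)$ gives $\int_{\mathbb R^N}\big(|\nabla u|^2+|\nabla v|^2+A(x)u^2+Bv^2+u^2|\nabla u|^2+v^2|\nabla v|^2+|u|^\alpha|v|^\beta\big)<\infty$, and $(A_2)$ guarantees $\int_{\mathbb R^N}(\nabla A\cdot x)u^2$ is finite; these finiteness facts are used both to make sense of the solid integrals over $\mathbb R^N$ and to dispose of the boundary terms.

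\textbf{Step 2 (the computation on $B_R$).} Using the elementary identities $\nabla u\cdot\nabla(x\cdot\nabla u)=|\nabla u|^2+\tfrac12\,x\cdot\nabla|\nabla u|^2$ and $u\,(x\cdot\nabla u)=\tfrac12\,x\cdot\nabla(u^2)$, integration by parts on $B_R$ yields $\int_{B_R}(-\Delta u)(x\cdot\nabla u)=\tfrac{2-N}{2}\int_{B_R}|\nabla u|^2+\mathcal B_R^{(1)}$, then $\int_{B_R}A(x)u(x\cdot\nabla u)=-\tfrac N2\int_{B_R}A(x)u^2-\tfrac12\int_{B_R}(\nabla A\cdot x)u^2+\mathcal B_R^{(2)}$, and, writing $-u^2\Delta u=-\operatorname{div}(u^2\nabla u)+2u|\nabla u|^2$ and integrating by parts once more, $\int_{B_R}(-u^2\Delta u-u|\nabla u|^2)(x\cdot\nabla u)=\tfrac{2-N}{2}\int_{B_R}u^2|\nabla u|^2+\mathcal B_R^{(3)}$, where the two cubic terms $u|\nabla u|^2(x\cdot\nabla u)$ that show up cancel against each other. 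On the right-hand side, $\tfrac{2\alpha}{\alpha+\beta}|u|^{\alpha-2}u|v|^\beta(x\cdot\nabla u)=\tfrac{2}{\alpha+\beta}|v|^\beta\,x\cdot\nabla(|u|^\alpha)$, and symmetrically for the second equation, so after adding the two equations the right-hand sides combine into $\tfrac{2}{\alpha+\beta}\int_{B_R}x\cdot\nabla(|u|^\alpha|v|^\beta)=-\tfrac{2N}{\alpha+\beta}\int_{B_R}|u|^\alpha|v|^\beta+\mathcal B_R^{(4)}$. Each $\mathcal B_R^{(j)}$ is a surface integral on $\partial B_R$ dominated by $C\,R\int_{\partial B_R}\big(|\nabla u|^2+|\nabla v|^2+u^2|\nabla u|^2+v^2|\nabla v|^2+u^2+v^2+|u|^\alpha|v|^\beta\big)\,dS$.

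\textbf{Step 3 (passing to the limit).} Since each density above has finite integral over $\mathbb R^N$, one has $\liminf_{R\to\infty}R\int_{\partial B_R}(\cdots)\,dS=0$, so there is a sequence $R_n\to\infty$ along which $\mathcal B_{R_n}^{(j)}\to 0$ for every $j$. Letting $n\to\infty$ in the summed identity and using dominated convergence on the solid integrals gives $\tfrac{2-N}{2}\int(|\nabla u|^2+|\nabla v|^2+u^2|\nabla u|^2+v^2|\nabla v|^2)-\tfrac N2\int(A(x)u^2+Bv^2)-\tfrac12\int(\nabla A\cdot x)u^2=-\tfrac{2N}{\alpha+\beta}\int|u|^\alpha|v|^\beta$; multiplying by $-1$ is exactly $P(u,v)=0$. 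The genuine difficulty here is the combination of Step 1 and the vanishing of the boundary terms: a priori $(u,v)$ only lies in $X$, so $u^2\Delta u$ is not evidently a function and $x\cdot\nabla u$ is not an admissible test direction in the weak formulation, hence one must first secure enough local regularity and only then exploit the global integrability coming from $(u,v)\in X$ to extract the radii $R_n$ along which the Rellich boundary integrals disappear. Once these two points are settled, everything else is the bookkeeping carried out above.
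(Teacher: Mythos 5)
Your proposal is correct and is exactly the standard Poho\v{z}aev argument that the paper itself invokes when it states ``the proof is standard, so we omit it here'': multiply each equation by $x\cdot\nabla u$ (resp.\ $x\cdot\nabla v$), integrate by parts on $B_R$, and kill the boundary terms along a sequence $R_n\to\infty$ using the global integrability coming from $(u,v)\in X$, $(A_1)$ and $(A_2)$; all your coefficients (in particular the $\frac{2-N}{2}\int u^{2}|\nabla u|^{2}$ contribution from $-\frac12 u\Delta(u^{2})$ after the cubic terms cancel, and the $-\frac{N}{2}\int A u^{2}-\frac12\int(\nabla A\cdot x)u^{2}$ term) check out against $P(u,v)=\frac{d}{d\lambda}\big|_{\lambda=1}I\bigl(u(\cdot/\lambda),v(\cdot/\lambda)\bigr)$. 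The only part you leave at the level of a sketch is the local regularity upgrade making the pointwise computation legitimate, which is indeed the standard bootstrap step and is acceptable here.
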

\begin{proof}
The proof is standard, so we omit it here.
\end{proof}

The lemma below shows (\ref{eq2.2}) is well defined.

\begin{Lemma}\label{le2.2} For any $(u,v)\in X$ and $u,v\neq0$, the map $h_{uv}$ attains its maximum at exactly one point $\bar{t}$. Moreover, $h_{uv}$ is positive and increasing for $t\in [0,\bar{t}]$ and decreasing for $t>\bar{t}$. Finally
$$m=\inf_{(u,v)\in X}\max_{t>0}I(u_{t},v_{t}). $$
\end{Lemma}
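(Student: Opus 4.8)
The plan is to reduce the lemma to elementary one-variable analysis of $h_{uv}(t)=I(u_t,v_t)$, whose explicit form is already recorded in the excerpt. Set
$$a:=\int_{\mathbb{R}^N}\big(|\nabla u|^2+|\nabla v|^2\big),\quad b:=\int_{\mathbb{R}^N}\big(A(x)u^2+Bv^2+u^2|\nabla u|^2+v^2|\nabla v|^2\big),\quad c:=\int_{\mathbb{R}^N}|u|^\alpha|v|^\beta,$$
so that $h_{uv}(t)=\tfrac12 a\,t^N+\tfrac12 b\,t^{N+2}-\tfrac{2}{\alpha+\beta}c\,t^{N+\alpha+\beta}$. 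The hypothesis $u,v\ne 0$, together with $(A_1)$ and $B>0$, makes $a,b,c$ all strictly positive; this is the only place that hypothesis enters, since if one of $u,v$ vanishes then $c=0$ and $h_{uv}$ is strictly increasing with no interior maximum.

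First I would pin down the unique maximizer and the monotonicity pattern. For $t>0$, dividing $h'_{uv}(t)$ (whose formula is also displayed in the excerpt) by $t^{N+\alpha+\beta-1}>0$ shows that $h'_{uv}(t)=0$ is equivalent to $\varphi(t)=\tfrac{2(N+\alpha+\beta)}{\alpha+\beta}c$, where
$$\varphi(t):=\frac{Na}{2}\,t^{-(\alpha+\beta)}+\frac{(N+2)b}{2}\,t^{-(\alpha+\beta-2)}.$$
Since $\alpha+\beta>2$, both exponents $-(\alpha+\beta)$ and $-(\alpha+\beta-2)$ are negative, so $\varphi$ is continuous and strictly decreasing on $(0,\infty)$, with $\varphi(t)\to+\infty$ as $t\to 0^+$ and $\varphi(t)\to 0$ as $t\to\infty$, while the right-hand side is a fixed positive number. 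Hence there is exactly one $\bar t>0$ with $\varphi(\bar t)=\tfrac{2(N+\alpha+\beta)}{\alpha+\beta}c$, and $\varphi(t)$ exceeds this value on $(0,\bar t)$ and lies below it on $(\bar t,\infty)$; multiplying back by $t^{N+\alpha+\beta-1}$ gives $h'_{uv}>0$ on $(0,\bar t)$, $h'_{uv}(\bar t)=0$, and $h'_{uv}<0$ on $(\bar t,\infty)$. Therefore $h_{uv}$ strictly increases on $[0,\bar t]$ and strictly decreases on $[\bar t,\infty)$, so $\bar t$ is its unique maximum point, and $h_{uv}(0)=0$ forces $h_{uv}>0$ on $(0,\bar t]$. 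This gives the first two assertions.

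For the min-max identity I would use two facts: comparing the displayed formulas for $h'_{uv}$ and $\mathcal G$ gives $\mathcal G(u_t,v_t)=t\,h'_{uv}(t)$ for every $t>0$, and the dilation $(u,v)\mapsto(u_t,v_t)$ maps $\mathcal V$ into $\mathcal V$. Consequently, for $(u,v)\in\mathcal V$ with $u,v\ne 0$, the unique critical point $\bar t$ of $h_{uv}$ is the only $t$ with $(u_t,v_t)\in\mathcal M$, so $\max_{t>0}I(u_t,v_t)=h_{uv}(\bar t)=I(u_{\bar t},v_{\bar t})\ge m$, and taking the infimum over such $(u,v)$ yields $\inf\max\ge m$. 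Conversely, any $(u,v)\in\mathcal M$ satisfies $h'_{uv}(1)=\mathcal G(u,v)=0$, hence $\bar t=1$ for this pair and $I(u,v)=h_{uv}(1)=\max_{t>0}I(u_t,v_t)\ge\inf\max$; taking the infimum over $\mathcal M$ gives $m\ge\inf\max$. Combining the two inequalities proves the equality (the infimum being understood over the pairs of $\mathcal V$ with $u,v\ne 0$, which is the natural class here since $\mathcal M\subset\mathcal V$ and dilations preserve $\mathcal V$).

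I do not expect a genuine obstacle here: once the formula for $I(u_t,v_t)$ is granted, the lemma is essentially one-variable calculus. The only points that need care are keeping the strict positivity of $a,b,c$ in view -- hence the role of the hypothesis $u,v\ne 0$ -- and the bookkeeping linking $m=\inf_{\mathcal M}I$ to the min-max, namely the identity $\mathcal G(u_t,v_t)=t\,h'_{uv}(t)$ and the $G_s$-invariance of the dilation.
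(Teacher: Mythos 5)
There is a genuine gap, and it is located exactly where your argument diverges from the paper: the treatment of the potential term. Under the dilation $u_{t}(x)=tu(t^{-1}x)$ one has
$$\int_{\mathbb R^{N}}A(x)u_{t}^{2}\,dx=t^{N+2}\int_{\mathbb R^{N}}A(tx)u^{2}\,dx,$$
so for a non-constant potential $A$ the coefficient you call $b$ is \emph{not} a constant: the term $\tfrac12 t^{N+2}\int A(tx)u^{2}$ carries an extra $t$-dependence through $A(tx)$. (The display of $I(u_t,v_t)$ before Lemma 2.1, which writes $A(x)$, is a slip of the paper; its own proof of this lemma and Step 1 of Lemma 3.3 use $A(s^{\frac{1}{N+\alpha+\beta}}x)$, i.e.\ $A(tx)$.) Your reduction to the strictly decreasing auxiliary function $\varphi(t)=\tfrac{Na}{2}t^{-(\alpha+\beta)}+\tfrac{(N+2)b}{2}t^{-(\alpha+\beta-2)}$ therefore only works when $A$ is constant (the setting of Corollary 1.2); for general $A$ satisfying $(A_{1})$--$(A_{4})$ the sign analysis of $h'_{uv}$ does not go through as written, because $h'_{uv}$ contains the additional term $\tfrac{t^{N+2}}{2}\int\nabla A(tx)\cdot x\,u^{2}$ and the remaining coefficient varies with $t$. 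The tell is that your proof never invokes hypothesis $(A_{3})$: that assumption exists precisely to rescue uniqueness of the maximizer in the non-constant case. The paper's route is different: substitute $s=t^{N+\alpha+\beta}$, observe that $s\mapsto s^{\frac{N}{N+\alpha+\beta}}$ and $s\mapsto s^{\frac{N+2}{N+\alpha+\beta}}$ are concave, that $s\mapsto s^{\frac{N+2}{N+\alpha+\beta}}A(s^{\frac{1}{N+\alpha+\beta}}x)$ is concave by $(A_{3})$, and that the nonlinear term is linear in $s$ with negative coefficient; hence $h_{uv}$ is concave in $s$, attains its maximum, and the maximizer is unique, after which the monotonicity and the identification of $\bar t$ with the unique parameter putting $(u_{\bar t},v_{\bar t})$ on $\mathcal M$ follow from $\mathcal G(u_t,v_t)=th'_{uv}(t)$.

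Your second part (the min-max identity) is in the right spirit and is essentially what the paper leaves implicit: $\mathcal G(u_t,v_t)=th'_{uv}(t)$, so the unique maximizer is the unique projection onto $\mathcal M$, and the two inequalities follow by projecting a general pair and by noting $\bar t=1$ for pairs already on $\mathcal M$; your remark that the infimum must be read over $\mathcal V$ with $u,v\neq0$ (rather than all of $X$, as the statement loosely says) is a fair and correct observation. But this part presupposes the uniqueness of $\bar t$, so it inherits the gap above: to repair the proof you must either restrict to constant $A$ or redo the uniqueness step via the concavity-in-$s$ argument using $(A_{3})$.
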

\begin{proof}
For any $t>0$, set $s=t^{N+\alpha+\beta}$, we obtain
$$\aligned h_{uv}(s)&=\frac{s^{\frac{N}{N+\alpha+\beta}}}{2}\int_{\mathbb R^{N}}|\nabla u|^{2}+\frac{s^{\frac{N}{N+\alpha+\beta}}}{2}\int_{\mathbb R^{N}}|\nabla v|^{2}+\frac{s^{\frac{N+2}{N+\alpha+\beta}}}{2}\int_{\mathbb R^{N}}u^{2}|\nabla u|^{2}\\
&\ \ \ \ +\frac{s^{\frac{N+2}{N+\alpha+\beta}}}{2}\int_{\mathbb R^{N}}v^{2}|\nabla v|^{2}+\frac{s^{\frac{N+2}{N+\alpha+\beta}}}{2}\int_{\mathbb R^{N}}A(s^{\frac{1}{N+\alpha+\beta}}x)u^{2}\\
&\ \ \ \ +\frac{s^{\frac{N+2}{N+\alpha+\beta}}}{2}\int_{\mathbb R^{N}}Bv^{2}-\frac{2s }{\alpha+\beta}\int_{\mathbb R^{N}}|u|^{\alpha}|v|^{\beta}.\endaligned $$
This is a concave function by condition $(A_{3})$ and we already know that it attains its maximum, let $\bar{t}$ be the unique point at which this maximum is achieved. Notice that $\mathcal{G}(u_{t},v_{t})=th'_{uv}(t)$, then $\bar{t}$ is the unique critical point of $h_{uv}$  and $h_{uv}$ is positive and increasing for $0<t <\bar{t}$ and decreasing for $t>\bar{t}$. In particular,  $\bar{t}\in \mathbb R$ is the unique value such that $u_{\bar{t}} \in\mathcal{M},$ and $I(u_{\bar{t}},v_{\bar{t}} )$ reaches a global maximum for $t=\bar{t}$. This finishes the proof.
\end{proof}

\begin{Lemma}\label{le2.3}
 $m>0$.
\end{Lemma}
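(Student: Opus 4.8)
The plan is to use the constraint $\mathcal{G}(u,v)=0$ defining $\mathcal{M}$ to rewrite $I$ on $\mathcal{M}$ as a strictly positive combination of the quadratic and quartic energy terms, and then to show that these terms cannot become arbitrarily small on $\mathcal{M}$. For $(u,v)\in\mathcal{M}$ I would abbreviate $a:=\int_{\mathbb R^{N}}(|\nabla u|^{2}+|\nabla v|^{2})$ and $b:=\int_{\mathbb R^{N}}(A(x)u^{2}+Bv^{2}+u^{2}|\nabla u|^{2}+v^{2}|\nabla v|^{2})$. Solving $\mathcal{G}(u,v)=0$ for the coupling term gives $\frac{2}{\alpha+\beta}\int_{\mathbb R^{N}}|u|^{\alpha}|v|^{\beta}=\frac{1}{N+\alpha+\beta}\bigl(\frac{N}{2}a+\frac{N+2}{2}b\bigr)$, and substituting this into the expression (\ref{eq21}) for $I$ yields
$$ I(u,v)=\frac{\alpha+\beta}{2(N+\alpha+\beta)}\,a+\frac{\alpha+\beta-2}{2(N+\alpha+\beta)}\,b . $$
Since $\alpha+\beta>2$ both coefficients are strictly positive, so $I(u,v)\ge\frac{\alpha+\beta-2}{2(N+\alpha+\beta)}(a+b)$ on $\mathcal{M}$, and it suffices to prove $\inf_{\mathcal{M}}(a+b)>0$.

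To obtain such a uniform lower bound I would estimate the coupling term from above. By $(A_{1})$ one has $|u|_{2}^{2}\le A_{0}^{-1}\int_{\mathbb R^{N}}A(x)u^{2}$, hence $a+b\ge c_{0}(\|u\|_{H^{1}}^{2}+\|v\|_{H^{1}}^{2})$ with $c_{0}:=\min\{1,A_{0},B\}>0$; also $a+b\ge\int_{\mathbb R^{N}}(u^{2}|\nabla u|^{2}+v^{2}|\nabla v|^{2})$. For $w\in H$ the function $w^{2}$ lies in $H^{1}$ with $|\nabla(w^{2})|_{2}^{2}=4\int_{\mathbb R^{N}}w^{2}|\nabla w|^{2}$, so the Sobolev inequality gives $|w|_{4N/(N-2)}^{4}=|w^{2}|_{2N/(N-2)}^{2}\le C\int_{\mathbb R^{N}}w^{2}|\nabla w|^{2}$. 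Interpolating the $L^{\alpha+\beta}$-norm between $L^{2}$ and $L^{4N/(N-2)}$ — legitimate precisely because $2<\alpha+\beta<\frac{4N}{N-2}$ — produces $|w|_{\alpha+\beta}^{\alpha+\beta}\le C(a+b)^{\gamma}$ with $\gamma=\frac{N+\alpha+\beta}{N+2}>1$. Combining this with H\"older's and Young's inequalities gives $\int_{\mathbb R^{N}}|u|^{\alpha}|v|^{\beta}\le\frac{\alpha}{\alpha+\beta}|u|_{\alpha+\beta}^{\alpha+\beta}+\frac{\beta}{\alpha+\beta}|v|_{\alpha+\beta}^{\alpha+\beta}\le C(a+b)^{\gamma}$. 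Feeding this back into the identity $\frac{N}{2}a+\frac{N+2}{2}b=\frac{2(N+\alpha+\beta)}{\alpha+\beta}\int_{\mathbb R^{N}}|u|^{\alpha}|v|^{\beta}$ and using $\frac{N}{2}(a+b)\le\frac{N}{2}a+\frac{N+2}{2}b$ gives $\frac{N}{2}(a+b)\le C(a+b)^{\gamma}$, hence $(a+b)^{\gamma-1}\ge\frac{N}{2C}$, i.e. $a+b\ge\rho>0$ uniformly over $\mathcal{M}$. Combined with the first paragraph this yields $m\ge\frac{(\alpha+\beta-2)\rho}{2(N+\alpha+\beta)}>0$.

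The main obstacle — and the only place where the subcritical range is used — is the interpolation step: one must verify that the interpolation parameter $\theta=\frac{2N(\alpha+\beta-2)}{(N+2)(\alpha+\beta)}$ lies in $(0,1)$ and that the resulting exponent $\gamma=\frac{N+\alpha+\beta}{N+2}$ exceeds $1$, both of which are equivalent to $2<\alpha+\beta<\frac{4N}{N-2}$. Everything else — the algebraic rewriting of $I$, the coercivity estimate from $(A_{1})$, and the H\"older/Young manipulation of the coupling term — is routine.
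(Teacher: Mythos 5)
Your proof is correct, and it is in fact stronger than the paper's own argument. The first step --- using the constraint to write $I=I-\frac{1}{N+\alpha+\beta}\mathcal{G}$ on $\mathcal{M}$ as a positive combination of the quadratic and quartic terms --- is exactly what the paper does; but the paper stops there, concluding only that $I(u,v)>0$ for each individual $(u,v)\in\mathcal{M}$, which by itself does not yield $\inf_{\mathcal{M}}I>0$. Your second step supplies precisely the missing uniform estimate: the Sobolev inequality applied to $w^{2}$ gives control of $|w|_{4N/(N-2)}$ by the quartic term, interpolation with the $L^{2}$-norm (controlled via $A\geq A_{0}>0$, $B>0$) gives $|w|_{\alpha+\beta}^{\alpha+\beta}\leq C(a+b)^{\gamma}$ with $\gamma=\frac{N+\alpha+\beta}{N+2}>1$, and feeding this into $\mathcal{G}=0$ forces $a+b\geq\rho>0$ uniformly on $\mathcal{M}$, hence $m\geq\frac{(\alpha+\beta-2)\rho}{2(N+\alpha+\beta)}>0$. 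Your exponent bookkeeping ($\theta\in(0,1)$ and $\gamma>1$ exactly when $2<\alpha+\beta<\frac{4N}{N-2}$) is right, as is the Young-inequality splitting of the coupling term. So your argument both contains the paper's computation and completes it.

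One caveat, which is an inconsistency in the paper rather than a gap in your reasoning: you worked with the displayed definition of $\mathcal{G}$, which contains no $\nabla A(x)\cdot x$ term, whereas the paper's own proof of this lemma (and of Lemma 3.3) manifestly uses the Pohozaev-type functional containing $\frac{1}{2}\int_{\mathbb R^{N}}\nabla A(x)\cdot x\,u^{2}$, i.e.\ the derivative of $t\mapsto I(u_{t},v_{t})$ with the $x$-dependence of $A$ taken into account; this is why the paper's identity features $\int_{\mathbb R^{N}}\bigl((\alpha+\beta-2)A(x)-\nabla A(x)\cdot x\bigr)u^{2}$, which is nonnegative by $(A_{2})$. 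With that version of $\mathcal{G}$, your clean identity acquires the same modified $A$-term, so the pointwise positivity still holds, but the coefficient in front of $\int_{\mathbb R^{N}}A(x)u^{2}$ in your lower bound is no longer an absolute constant (since $(A_{2})$ only bounds $\nabla A\cdot x$ from above), and the coercivity $a+b\geq c_{0}(\|u\|_{H^{1}}^{2}+\|v\|_{H^{1}}^{2})$ used at the $L^{2}$ endpoint of the interpolation would need to be recovered differently --- for instance by running your interpolation estimate on $I(u_{t_{0}},v_{t_{0}})$ with $A\geq A_{0}$ and a suitable normalization along the ray, using the characterization $m=\inf\max_{t}I(u_{t},v_{t})$ of Lemma \ref{le2.2}. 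As written against the paper's stated definition of $\mathcal{G}$, your proof is complete.
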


\begin{proof}
For every $(u,v)\in \mathcal{M} $, it follows from $(A_{2})$ that
$$\aligned I(u,v)&=\frac{\alpha+\beta}{2(N+\alpha+\beta)}\int_{\mathbb R^{N}}(|\nabla u|^{2}+ |\nabla v|^{2})\\
&\ \ \ + \frac{\alpha+\beta-2}{2(N+\alpha+\beta)}\int_{\mathbb R^{N}}(Bv^{2}+u ^{2}|\nabla u |^{2}+ v ^{2}|\nabla v |^{2})\\
&\ \ \ +\frac{1}{2(N+\alpha+\beta)}\int_{\mathbb R^{N}}\bigl((\alpha+\beta-2)A(x)-\nabla A(x)\cdot x\bigr)u^{2}\\
&>0.\endaligned $$
The proof is complete.
\end{proof}

\mysection{Proof of Theorem \ref{th1.1} }

We need the following variant of Lions lemma.

\begin{Lemma}\label{le2.70}
If $q\in[2,\frac{4N}{N-2}),$ $\{u_{n}\}$ is bounded in $X$, $r_{ 0}>0$ is such that for all $r\geq r_{0}$
\begin{equation}\label{eq3.2} \lim_{n\rightarrow\infty}\sup_{z\in \mathbb R}\int_{B((0,z),r)}|u_{n}|^{q}=0,
\end{equation}
then we have $u_{n}\rightarrow0$ in $ L^{p}(\mathbb R^{N})$ for $p\in(2,\frac{4N}{N-2}).$
\end{Lemma}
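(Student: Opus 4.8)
The plan is to run the classical Lions \emph{vanishing} argument, adapted to the quasilinear space $X$, the gain being that control of $\int_{\mathbb R^N}u^2|\nabla u|^2$ makes $u^2$ lie in $H^1$. First I would record the consequences of boundedness: if $\{u_n\}$ is bounded in $X$, then $\|u_n\|_{H^1}$ and $\int_{\mathbb R^N}u_n^2|\nabla u_n|^2=\frac14\int_{\mathbb R^N}|\nabla(u_n^2)|^2$ are bounded, so $u_n^2$ is bounded in $H^1(\mathbb R^N)$; by the Sobolev embedding $u_n^2$ is bounded in $L^{\frac{2N}{N-2}}$, hence $u_n$ is bounded in $L^{\frac{4N}{N-2}}$, and by interpolation $u_n$ is bounded in $L^t(\mathbb R^N)$ for every $t\in[2,\frac{4N}{N-2}]$. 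Fix $p\in(2,\frac{4N}{N-2})$.

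The heart of the matter is a local interpolation estimate. On a ball $B$ of radius $r_0$: if $p\le\frac{2N}{N-2}$, use H\"older and Sobolev, $\|u_n\|_{L^p(B)}\le C\|u_n\|_{L^q(B)}^{1-\theta}\|u_n\|_{H^1(B)}^{\theta}$ with the exponents related in the usual way (and if $q\ge p$ a plain H\"older on $B$ already suffices); if $\frac{2N}{N-2}<p<\frac{4N}{N-2}$, apply the same inequality to $u_n^2\in H^1$ with exponent $p/2\in(1,\frac{2N}{N-2})$, using $\int_B u_n^2\le C(r_0)\|u_n\|_{L^q(B)}^2$ (valid since $q\ge 2$). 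Raising to the power $p$, splitting any interpolation power $\gamma\ge2$ as $\|\cdot\|^{\gamma-2}\|\cdot\|^2\le C\|\cdot\|^2$ by the global bounds (and a H\"older on the sum when $\gamma<2$), then summing over a locally finite cover of $\mathbb R^N$ by balls of radius $r_0$ with bounded overlap and pulling out a supremum, I obtain
$$\|u_n\|_{L^p(\mathbb R^N)}^{p}\le C\Big(\sup_{\xi}\|u_n\|_{L^q(B(\xi,r_0))}\Big)^{\sigma}$$
for some $\sigma>0$. Thus everything reduces to proving that $\|u_n\|_{L^q(B(\xi,r_0))}\to 0$ uniformly in the centre $\xi$.

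This is where hypothesis (\ref{eq3.2}) and the symmetry of the underlying class enter. If the centre $\xi=(y,z)$ satisfies $|y|\le r_0$, then $B(\xi,r_0)\subset B((0,z),2r_0)$ and the smallness follows at once from (\ref{eq3.2}). For centres with $|y|$ large one cannot invoke (\ref{eq3.2}) directly, since the radius needed to reach such $\xi$ from the $z$-axis is unbounded; instead one uses that the lemma is applied to functions in $\mathcal V$, so that $|u_n|$ is invariant under the rotations of $G_s$ acting on the $y$-variables (and $u_n$ vanishes on the associated reflection hyperplanes). Together with the decay/averaging properties of such symmetric $H^1$-functions this is what prevents $L^p$-mass from escaping in the directions transverse to the $z$-axis, forcing the contribution of $\{|y|\ge R\}$ to be uniformly small as $R\to\infty$; combining with the slab estimate gives $\sup_\xi\|u_n\|_{L^q(B(\xi,r_0))}\to0$ and hence $u_n\to0$ in $L^p(\mathbb R^N)$.

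I expect this last step to be the genuine obstacle: passing from the cylindrically localized information (\ref{eq3.2}) to smallness on all of $\mathbb R^N$. This is the only place where the symmetry is essential — without it a bump translated away from the $z$-axis would be bounded in $X$, satisfy (\ref{eq3.2}), yet not vanish in $L^p$ — and obtaining the required uniform control of the region far from the axis is the delicate point; the interpolation and covering of the previous step are otherwise routine once one remembers to work with $u_n^2$.
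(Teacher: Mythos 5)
Your proposal does not contain a proof of the one step that actually carries the lemma. The first half (boundedness of $u_n$ in $L^{t}$ up to $t=\frac{4N}{N-2}$ via $\nabla(u_n^2)\in L^2$ and Sobolev, then a local interpolation plus a bounded-overlap covering to get $\|u_n\|_{L^p(\mathbb R^N)}^p\leq C\bigl(\sup_{\xi}\|u_n\|_{L^q(B(\xi,r_0))}\bigr)^{\sigma}$) is the standard Lions-type vanishing lemma adapted to $X$; the paper does not reprove it but simply invokes \cite[Lemma 2.2]{wz}, so up to that point you are reconstructing a cited ingredient, which is fine. The genuine content of the lemma is the passage from the cylindrical information (\ref{eq3.2}) (supremum only over centres on the set $\{(x_1,x_2)=0\}$) to vanishing of $\sup_{z\in\mathbb R^N}\int_{B(z,r)}|u_n|^q$, and there you offer only the heuristic that ``decay/averaging properties of symmetric $H^1$-functions'' force the mass in $\{|y|\geq R\}$ to be uniformly small as $R\to\infty$. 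You yourself flag this as the delicate point, and indeed no argument is given; moreover the statement you aim at is not the right one: for the finite group $G_s$ the orbit of a point far from the axis has only $2s$ elements, so a single bump at distance $R$ together with its $2s$ symmetric copies is bounded in $X$, satisfies (\ref{eq3.2}), and keeps a fixed amount of $L^q$-mass at arbitrary distance from the axis. Hence no ``uniform smallness far from the axis'' can be extracted from the symmetry alone, and your proposed mechanism cannot close the proof.

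What the paper does instead is a contradiction-plus-counting argument on the concentration centres. Assuming $\int_{B(z_n,1)}|u_n|^q\geq c>0$, one looks at the orbit balls $\{B(gz_n,1)\}$ under the rotations acting on the $(x_1,x_2)$-plane: since $|u_n|$ is invariant, each of these balls carries $L^q$-mass at least $c$, and if $|(z_n^1,z_n^2)|\to\infty$ the orbit contains an ever larger number of pairwise disjoint balls, which contradicts the boundedness of $\{u_n\}$ in $L^q(\mathbb R^N)$. Therefore the centres $z_n$ stay at bounded distance from the axis, so for a fixed large $r\geq r_0$ one has $B(z_n,1)\subset B((0,z_n'),r)$ with $(0,z_n')$ on the axis, and then (\ref{eq3.2}) contradicts $\int_{B(z_n,1)}|u_n|^q\geq c$. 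This quantitative pigeonhole on the number of disjoint orbit balls is exactly the missing idea in your write-up (note it uses invariance of $|u_n|$ under the rotation group, a hypothesis that is implicit in the lemma and that you correctly observed cannot be dispensed with); without it your argument establishes only the routine reduction and not the lemma itself.
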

\begin{proof}
By using \cite[Lemma 2.2]{wz}, it remains to prove that for some $r > 0$,
$$ \lim_{n\rightarrow\infty}\sup_{z\in \mathbb R^{N}}\int_{B(z,r)}|u_{n}|^{q}=0.$$
Suppose that
\begin{equation}\label{eq3.3}
\int_{B(z_{n},1)}|u_{n}|^{q}\geq c>0.
\end{equation}
Observe that in the family $\{B(gz_{ n },1)\}_{g\in O(2)}$, we find an increasing number of disjoint balls provided that $|(z_{n}^{ 1},z_{n}^{ 2})|\rightarrow\infty $. Since $\{u_{ n }\}$ is bounded in $L^{q} (\mathbb R^{ N }),q\in[2,\frac{4N}{N-2})$, by (\ref{eq3.3}), $|(z_{n}^{ 1},z_{n}^{ 2})|$ must be bounded. Then for sufficiently large $r\geq r_{0}$, one obtains
$$\int_{B((0,z_{n}^{3}),r)}|u_{n}|^{q}\geq\int_{B(z_{n},1)}|u_{n}|^{q}\geq c>0,$$
and we get a contradiction with (\ref{eq3.2}).
\end{proof}

\begin{Lemma}\label{le3.2}
Let  $u_{n}\rightharpoonup u, v_{n}\rightharpoonup v$  in $X$, $u_{n}\rightarrow u, v_{n}\rightarrow v$ a.e in $\mathbb R^{N}$. Then
$$\aligned
&\lim_{n\rightarrow\infty}\int_{\mathbb R^{N}}|u_{n}|^{\alpha}|v_{n} |^{\beta}-\int_{\mathbb R^{N}}|u|^{\alpha}|v|^{\beta} =\lim_{n\rightarrow\infty}\int_{\mathbb R^{N}}|u_{n}-u|^{\alpha}|v_{n}-v|^{\beta}.
\endaligned$$
\end{Lemma}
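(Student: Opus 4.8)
The plan is to prove the stronger assertion
$$\int_{\mathbb R^{N}}\Big|\,|u_n|^{\alpha}|v_n|^{\beta}-|u|^{\alpha}|v|^{\beta}-|u_n-u|^{\alpha}|v_n-v|^{\beta}\,\Big|\longrightarrow 0 ,$$
which contains the stated identity. Write $w_n:=u_n-u$, $y_n:=v_n-v$, so that $w_n,y_n\to0$ a.e. in $\mathbb R^{N}$, and set $p:=\alpha+\beta\in(2,\tfrac{4N}{N-2})$. The engine of the proof is a multivariable Brezis--Lieb type algebraic inequality: for every $\varepsilon>0$ there exists $C_{\varepsilon}>0$ such that
$$\Big|\,|a+a'|^{\alpha}|b+b'|^{\beta}-|a|^{\alpha}|b|^{\beta}-|a'|^{\alpha}|b'|^{\beta}\,\Big|\le\varepsilon\big(|a'|^{p}+|b'|^{p}\big)+C_{\varepsilon}\big(|a|^{p}+|b|^{p}\big)$$
for all $a,a',b,b'\in\mathbb R$.

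I would prove this inequality by a homogeneity--compactness argument. Call the left-hand side $|F(a,b,a',b')|$. The map $F$ is continuous on $\mathbb R^{4}$ (since $\alpha,\beta>1$), it is positively homogeneous of degree $p$ under the joint dilation $(a,b,a',b')\mapsto\lambda(a,b,a',b')$, and it vanishes identically on the plane $\{a=b=0\}$. Hence it suffices to establish the inequality on the compact unit sphere $\Sigma:=\{|a|^{p}+|b|^{p}+|a'|^{p}+|b'|^{p}=1\}$: uniform continuity of $F$ (together with $F\equiv0$ when $a=b=0$) yields $|F|<\varepsilon$ on the part of $\Sigma$ where $|a|^{p}+|b|^{p}\le\delta(\varepsilon)$, whereas on the complementary part of $\Sigma$ one bounds $|F|$ by its finite maximum over $\Sigma$ and divides by $\delta(\varepsilon)$, thereby producing $C_{\varepsilon}$; homogeneity then lifts the estimate from $\Sigma$ to all of $\mathbb R^{4}$.

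Before applying this, I would record the needed integrability. Since $u_n\rightharpoonup u$ and $v_n\rightharpoonup v$ in $X$, the sequences are bounded, so $\int u_n^{2}|\nabla u_n|^{2}$ and $\int v_n^{2}|\nabla v_n|^{2}$ are bounded, whence $u_n^{2},v_n^{2}$ are bounded in $D^{1,2}(\mathbb R^{N})$ and hence in $L^{2N/(N-2)}(\mathbb R^{N})$; combined with the $H^{1}$-bounds and interpolation, $\{u_n\},\{v_n\}$, and therefore $\{w_n\},\{y_n\}$, are bounded in $L^{q}(\mathbb R^{N})$ for every $q\in[2,\tfrac{4N}{N-2}]$, in particular in $L^{p}$, while $u,v\in X\subset L^{p}$.

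Finally, I would apply the pointwise inequality with $(a,b,a',b')=(u,v,w_n,y_n)$ and set $G_n:=\big|\,|u_n|^{\alpha}|v_n|^{\beta}-|u|^{\alpha}|v|^{\beta}-|w_n|^{\alpha}|y_n|^{\beta}\,\big|$, so that $G_n\to0$ a.e. The function $W_{n,\varepsilon}:=\big(G_n-\varepsilon(|w_n|^{p}+|y_n|^{p})\big)^{+}$ satisfies $0\le W_{n,\varepsilon}\le C_{\varepsilon}(|u|^{p}+|v|^{p})\in L^{1}(\mathbb R^{N})$ and $W_{n,\varepsilon}\to0$ a.e., so $\int W_{n,\varepsilon}\to0$ by dominated convergence; since $G_n\le W_{n,\varepsilon}+\varepsilon(|w_n|^{p}+|y_n|^{p})$ and $\sup_n\int(|w_n|^{p}+|y_n|^{p})<\infty$, letting $n\to\infty$ and then $\varepsilon\to0$ gives $\int G_n\to0$, which is the claim. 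The only genuinely non-routine step is the algebraic inequality above; the remainder is the classical Brezis--Lieb truncation together with the Sobolev embeddings for $X$.
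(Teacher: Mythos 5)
Your proof is correct, but it follows a genuinely different route from the paper's. The paper never proves a pointwise inequality: it writes
$\int_{\mathbb R^{N}}|u_n|^{\alpha}|v_n|^{\beta}-\int_{\mathbb R^{N}}|u_n-u|^{\alpha}|v_n-v|^{\beta}$
as the sum of the two cross terms
$\int_{\mathbb R^{N}}\bigl(|u_n|^{\alpha}-|u_n-u|^{\alpha}\bigr)|v_n|^{\beta}$ and
$\int_{\mathbb R^{N}}|u_n-u|^{\alpha}\bigl(|v_n|^{\beta}-|v_n-v|^{\beta}\bigr)$,
and then invokes the refined Brezis--Lieb lemma of Moroz and Van Schaftingen \cite{mj} to get the strong convergences $|u_n|^{\alpha}-|u_n-u|^{\alpha}\to|u|^{\alpha}$ in $L^{p/\alpha}$ and $|v_n|^{\beta}-|v_n-v|^{\beta}\to|v|^{\beta}$ in $L^{p/\beta}$ (with $p=\alpha+\beta$), which it pairs with the weak convergences $|v_n|^{\beta}\rightharpoonup|v|^{\beta}$ in $L^{p/\beta}$ and $|u_n-u|^{\alpha}\rightharpoonup 0$ in $L^{p/\alpha}$. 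You instead prove a two-variable Brezis--Lieb inequality
$\bigl||a+a'|^{\alpha}|b+b'|^{\beta}-|a|^{\alpha}|b|^{\beta}-|a'|^{\alpha}|b'|^{\beta}\bigr|\le\varepsilon(|a'|^{p}+|b'|^{p})+C_{\varepsilon}(|a|^{p}+|b|^{p})$
by a homogeneity--compactness argument (which is sound: the left side is continuous, $p$-homogeneous, and vanishes when $a=b=0$, so the sphere argument and scaling give the bound), and then run the classical truncation plus dominated convergence scheme. What your route buys is self-containedness (no appeal to the cited splitting lemma) and a strictly stronger conclusion, namely $L^{1}$-convergence of the pointwise difference $|u_n|^{\alpha}|v_n|^{\beta}-|u|^{\alpha}|v|^{\beta}-|u_n-u|^{\alpha}|v_n-v|^{\beta}$, of which the stated identity is an immediate consequence; what the paper's route buys is brevity, since the nonlinear splitting is outsourced to a known lemma and only elementary strong--weak pairing remains. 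Both arguments use the same integrability input, namely that boundedness in $X$ gives bounds in $L^{q}$ up to $q=\frac{4N}{N-2}$ (via $u_n^{2}$ bounded in $D^{1,2}$), so that $u,v\in L^{p}$ and $u_n-u$, $v_n-v$ are bounded in $L^{p}$.
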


\begin{proof}
For $n=1,\ 2,\ldots$, we have that
$$\begin{aligned}
&\ \int_{\mathbb R^{N}}|u_n|^\alpha |v_n|^\beta -\int_{\mathbb R^{N}}|u_n-u|^\alpha |v_n-v|^\beta\\
=&\int_{\mathbb R^{N}}(|u_n|^\alpha-|u_n-u|^\alpha) |v_n|^\beta+\int_{\mathbb R^{N}}|u_n-u|^\alpha (|v_n|^\beta-|v_n-v|^\beta).
\end{aligned}$$
Since $u_n\rightharpoonup u, v_n\rightharpoonup v$ in $H^1 (\mathbb{R}^N)$, from \cite[Lemma 2.5]{mj}, one has
$$\int_{\mathbb R^{N}}(|u_n|^\alpha-|u_n-u|^\alpha-|u|^\alpha)^{\frac{p}{\alpha}}\rightarrow0,\ \ n\rightarrow\infty,$$
which means that $|u_n|^\alpha-|u_n-u|^\alpha \rightarrow |u|^\alpha $  in $L^{\frac{p}{\alpha}} (\mathbb{R}^N)$.
Using $|v_n|^\beta \rightharpoonup |v|^\beta$ in $L^{\frac{p}{\beta}} (\mathbb{R}^N)$, it follows from $\alpha +\beta = p$ that
$$\int_{\mathbb R^{N}}(|u_n|^\alpha-|u_n-u|^\alpha) |v_n|^\beta\rightarrow\int_{\mathbb R^{N}}|u|^\alpha |v|^\beta,\ \ n\rightarrow\infty.$$
Similarly, $|v_n|^\beta-|v_n-v|^\beta \rightarrow |v|^\beta $  in $L^{\frac{p}{\beta}} (\mathbb{R}^N)$.
As $|u_n-u|^\alpha \rightharpoonup 0$ in $L^{\frac{p}{\alpha}} (\mathbb{R}^N)$, we obtain that
$$\int_{\mathbb R^{N}}|u_n-u|^\alpha (|v_n|^\beta-|v_n-v|^\beta)\rightarrow0,\ \ n\rightarrow\infty.$$
This proves the lemma.
\end{proof}

The following Lemma is due to  Poppenberg, Schmitt and Wang  from \cite[Lemma 2]{psw}.
\begin{Lemma}\label{le33}
Assume that  $u_{n}\rightharpoonup u$ in $H^{1}(\mathbb R^{N})$. Then
\begin{equation}\label{eq312}
\aligned\liminf_{n\rightarrow\infty}\int_{\mathbb R^{N}} u_{n}^{2}|\nabla u_{n}|^{2} &\geq \liminf_{n\rightarrow\infty}\int_{\mathbb R^{N}}(u_{n}-u)^{2}|\nabla u_{n}-\nabla u|^{2}\\
&\ \ \ \ +\int_{\mathbb R^{N}} u^{2}|\nabla u|^{2}.\endaligned
\end{equation}
\end{Lemma}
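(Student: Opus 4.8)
The statement to prove is Lemma~\ref{le33}, which is the Brezis--Lieb type splitting inequality for the quasilinear term $\int u_n^2|\nabla u_n|^2$.

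The plan is to exploit the pointwise algebraic identity for the quadratic expression and then use weak lower semicontinuity together with the standard Brezis--Lieb lemma applied to the (linear) weakly convergent sequences. First I would set $w_n:=u_n-u$, so that $w_n\rightharpoonup 0$ in $H^1(\mathbb R^N)$ and, passing to a subsequence, $u_n\to u$ a.e.\ and $\nabla u_n\rightharpoonup \nabla u$ in $L^2$. Write the integrand as
$$u_n^2|\nabla u_n|^2=(w_n+u)^2|\nabla w_n+\nabla u|^2,$$
expand it, and group the terms into three pieces: the ``pure'' term $w_n^2|\nabla w_n|^2$, the ``fixed'' term $u^2|\nabla u|^2$, and a remainder $R_n$ consisting of all the mixed terms (those containing at least one factor of $u$, one of $\nabla u$, at least one $w_n$, or at least one $\nabla w_n$, in the right combination). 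The goal is then to show $\liminf \int u_n^2|\nabla u_n|^2 \ge \liminf \int w_n^2|\nabla w_n|^2 + \int u^2|\nabla u|^2$, which follows if the mixed terms do not contribute negatively in the limit inferior.

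The key step is to handle the remainder $R_n$. Several of its terms, such as $2u\,w_n|\nabla w_n|^2$ or $w_n^2\nabla w_n\cdot\nabla u$, involve the factor $w_n^2|\nabla w_n|^2$ or $w_n|\nabla w_n|^2$ multiplied by something small; one shows these integrate to a quantity that is controlled. The cleanest route, and the one I expect Poppenberg--Schmitt--Wang take, is to avoid term-by-term analysis of the ``bad'' cubic/quartic mixed terms by instead using a convexity/lower-semicontinuity argument: the functional $\phi\mapsto \int \phi^2|\nabla\phi|^2$ is weakly lower semicontinuous on bounded sets of $H^1\cap\{$finite quasilinear energy$\}$ after extracting an a.e.\ convergent subsequence (this is because $u_n^2|\nabla u_n|^2\to u^2|\nabla u|^2$ a.e., combined with Fatou). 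More precisely, apply Fatou's lemma to the nonnegative sequence obtained after subtracting: one verifies that
$$u_n^2|\nabla u_n|^2 - w_n^2|\nabla w_n|^2 \longrightarrow u^2|\nabla u|^2 \quad\text{a.e. in }\mathbb R^N,$$
since $w_n\to 0$ a.e.\ forces $w_n^2|\nabla w_n|^2$ and $u_n^2|\nabla u_n|^2$ to behave compatibly pointwise (using $\nabla w_n=\nabla u_n-\nabla u$ and that $\nabla u_n$ need not converge a.e., so some care is needed here). If a.e.\ convergence of the difference is not directly available because $\nabla u_n$ lacks a.e.\ convergence, the fallback is the mixed-term estimate: write $R_n$ as a sum of integrals each of which contains a factor that converges strongly to zero in an appropriate $L^p$ space (by the a.e.\ convergence $w_n\to0$, Rellich on balls, and boundedness of $w_n^2|\nabla w_n|^2$ in $L^1$), so that $\int R_n \to 0$ or at least $\liminf \int R_n \ge 0$, whence the claimed inequality.

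The main obstacle is precisely the treatment of $R_n$: since $\nabla u_n$ is only weakly convergent in $L^2$ (not a.e.\ or strongly), one cannot pass to the limit naively in the mixed terms, and the quasilinear factor $w_n^2|\nabla w_n|^2$ is only bounded in $L^1$, which is not a reflexive space, so one cannot extract weak limits there either. The resolution is to notice that in each mixed term of $R_n$ there is always at least one factor of $u$ or $\nabla u$ (fixed, with good integrability) paired against a factor built from $w_n$ that tends to zero strongly in some $L^q_{loc}$, and that the ``worst'' terms ($w_n^2\,\nabla w_n\cdot\nabla u$ and $2u\,w_n\,|\nabla w_n|^2$) can be bounded using Cauchy--Schwarz or Young's inequality by $\varepsilon\int w_n^2|\nabla w_n|^2 + C_\varepsilon\int(\text{small})$, and then absorbed; the $\varepsilon$-loss is acceptable because the inequality is only a $\liminf\ge\liminf$ statement, not an equality. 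Letting $\varepsilon\to0$ after taking $\liminf$ closes the argument.
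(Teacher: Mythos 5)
There is a genuine gap here, and it is worth noting first that the paper itself does not prove this lemma: its ``proof'' is a one-line reference to Poppenberg--Schmitt--Wang \cite[Lemma 2]{psw}, so the honest options are either to cite that lemma as the paper does or to supply a complete argument; your sketch does not yet do the latter. Your decomposition $u_n=w_n+u$, $w_n\rightharpoonup 0$, with the expansion of $u_n^2|\nabla u_n|^2$ into $w_n^2|\nabla w_n|^2+u^2|\nabla u|^2+R_n$, is the natural starting point, and you correctly identify the obstruction (no a.e.\ or strong convergence of $\nabla u_n$, and $w_n^2|\nabla w_n|^2$ only bounded in $L^1$). But neither of your two proposed resolutions closes it. Route one, the claimed a.e.\ convergence $u_n^2|\nabla u_n|^2-w_n^2|\nabla w_n|^2\to u^2|\nabla u|^2$, fails exactly because $\nabla u_n$ need not converge pointwise, as you yourself admit. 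Route two, the $\varepsilon$-absorption, breaks down on the concrete terms: Young's inequality applied to $2\int u\,w_n|\nabla w_n|^2$ produces $\varepsilon\int w_n^2|\nabla w_n|^2+C_\varepsilon\int u^2|\nabla w_n|^2$, and the complementary integral $\int u^2|\nabla w_n|^2$ is \emph{not} small as $n\to\infty$ (it need not even be finite without a truncation of $u$, since $u$ may be unbounded and only $\nabla w_n\in L^2$ is known); there is no choice of ``small'' factor for this term. Similarly, the remainders you call small, such as $\int w_n^2|\nabla u|^2$, $\int|u||w_n||\nabla u|^2$, or $\int u^2\nabla w_n\cdot\nabla u$, are pairings of a merely weakly null, a.e.\ convergent sequence against weights that are only in $L^1$ (or require $u^2\nabla u\in L^2$, which is not known); these are borderline Hardy-type quantities, and a.e.\ convergence plus $H^1$-boundedness does not force them to vanish -- concentration of $w_n$ at a singularity of $u$ or of $\nabla u$ is exactly the scenario that defeats this argument. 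Hence the key claim $\liminf_n\int R_n\ge 0$ is not established.

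A workable proof needs more structure than term-by-term estimates: one first reduces to the case where $\liminf\int u_n^2|\nabla u_n|^2$ is finite, so that $\nabla(u_n^2)=2u_n\nabla u_n$ is bounded in $L^2$ and $u_n^2\rightharpoonup u^2$ in $D^{1,2}$, and then handles the cross terms via truncations of $u$ and careful passages to the limit (this is the content of \cite[Lemma 2]{psw}, which is genuinely more delicate than a Brezis--Lieb expansion plus Young's inequality). As written, your proposal is a plan that stops precisely at the step where the real work lies, so it cannot be accepted as a proof of the lemma; either carry out the truncation argument in detail or defer to the cited lemma, as the paper does.
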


\begin{proof}
The proof is analogous to that of \cite[Lemma 2]{psw}, so we omit it here.
\end{proof}
\begin{Lemma}\label{le3.3}  $m$ is achieved at some $(u,v)\in  \mathcal{M}$.
\end{Lemma}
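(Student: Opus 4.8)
The plan is to run a direct minimization on the Nehari--Poho\v{z}aev set $\mathcal M$. Let $\{(u_n,v_n)\}\subset\mathcal M$ with $I(u_n,v_n)\to m$. Rewriting $I$ via the representation used in the proof of Lemma~\ref{le2.3}, all of whose summands are nonnegative by $(A_1)$--$(A_2)$, and using the boundedness of $I(u_n,v_n)$, one reads off that $\{\nabla u_n\},\{\nabla v_n\},\{v_n\}$ are bounded in $L^2$ and that $\int_{\mathbb R^N}u_n^2|\nabla u_n|^2$, $\int_{\mathbb R^N}v_n^2|\nabla v_n|^2$ are bounded; to bound $\{u_n\}$ in $L^2$ I would use the constraint $\mathcal G(u_n,v_n)=0$, which yields $A_0|u_n|_2^2\le C\int_{\mathbb R^N}|u_n|^\alpha|v_n|^\beta\le C|u_n|_{\alpha+\beta}^\alpha|v_n|_{\alpha+\beta}^\beta$, followed by a Gagliardo--Nirenberg interpolation --- licit because $\alpha+\beta>\alpha+1$, which forces the resulting exponent of $|u_n|_2$ to be $<2$. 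So $\{(u_n,v_n)\}$ is bounded in $X$. Moreover, on $\mathcal M$ that representation is, through $\mathcal G(u_n,v_n)=0$, bounded above by $C\int_{\mathbb R^N}|u_n|^\alpha|v_n|^\beta$, so $I(u_n,v_n)\to m>0$ forces $\liminf_n\int_{\mathbb R^N}|u_n|^\alpha|v_n|^\beta>0$, whence by H\"older neither $\{u_n\}$ nor $\{v_n\}$ tends to $0$ in $L^{\alpha+\beta}(\mathbb R^N)$.

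Since $\alpha+\beta\in(2,\frac{4N}{N-2})$, Lemma~\ref{le2.70} (in contrapositive form) applied to $\{u_n\}$ and to $\{v_n\}$ gives $r>0$ and points on the $z$-axis carrying a fixed share of the $L^{\alpha+\beta}$-mass of $u_n$ and of $v_n$; translating the pair in the $z$-variable --- which preserves $\mathcal V$, since the $G_s$-action only touches the $y$-coordinates --- I may assume, along a subsequence, that $u_n\rightharpoonup u$, $v_n\rightharpoonup v$ in $X$, $u_n\to u$, $v_n\to v$ in $L^p_{loc}(\mathbb R^N)$ for $p\in(2,\frac{4N}{N-2})$ and a.e., with $u\neq0$ and $v\neq0$. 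The essential point is then to improve this to $\int_{\mathbb R^N}|u_n-u|^\alpha|v_n-v|^\beta\to0$. Here I would argue by contradiction: if this integral did not go to $0$, then $w_n:=u_n-u\rightharpoonup0$, $\omega_n:=v_n-v\rightharpoonup0$ would still not vanish in $L^{\alpha+\beta}$, so by Lemma~\ref{le2.70} a further bubble would concentrate along the $z$-axis and, since $w_n,\omega_n\rightharpoonup0$, it must escape to infinity; translating it back and letting $n\to\infty$, where condition $(A_1)$ ($A\le A_\infty$ and $A(x)\to A_\infty$) lets one replace $A$ by the constant $A_\infty$, this escaping mass would account for at least the least-energy level $m_\infty$ of the autonomous system, and, by the Brezis--Lieb-type splitting of $I$ along $(u_n,v_n)=(u,v)+(w_n,\omega_n)$ --- Lemma~\ref{le3.2} for the coupling term, weak lower semicontinuity together with Lemma~\ref{le33} for the remaining terms --- together with $m\le m_\infty$ (which follows from the min-max description $m=\inf_X\max_{t>0}I(u_t,v_t)$ of Lemma~\ref{le2.2} and $I\le I_\infty$), this would contradict $\lim_nI(u_n,v_n)=m$. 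Turning the $G_s$-symmetry (through Lemma~\ref{le2.70}) and the sign hypothesis $A\le A_\infty$ into this strong convergence is the step I expect to be the main difficulty.

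Granting $\int_{\mathbb R^N}|u_n-u|^\alpha|v_n-v|^\beta\to0$, Lemma~\ref{le3.2} gives $\int_{\mathbb R^N}|u_n|^\alpha|v_n|^\beta\to\int_{\mathbb R^N}|u|^\alpha|v|^\beta$ and the argument closes quickly. Set $J:=I-\frac{1}{N+\alpha+\beta}\mathcal G$; a direct computation gives
$$\aligned J(u,v)&=\frac{\alpha+\beta}{2(N+\alpha+\beta)}\int_{\mathbb R^N}(|\nabla u|^2+|\nabla v|^2)\\
&\ \ \ +\frac{\alpha+\beta-2}{2(N+\alpha+\beta)}\int_{\mathbb R^N}\bigl(A(x)u^2+Bv^2+u^2|\nabla u|^2+v^2|\nabla v|^2\bigr),\endaligned$$
so $J\ge0$, $J$ is weakly lower semicontinuous (Fatou for the quadratic terms, Lemma~\ref{le33} for the quasilinear ones), $J\equiv I$ on $\mathcal M$, and $t\mapsto J(u_t,v_t)$ is strictly increasing on $(0,\infty)$ whenever $(u,v)\neq(0,0)$ (its reparametrization in $s=t^{N+\alpha+\beta}$ is concave by $(A_3)$, vanishes at $s=0$ and tends to $+\infty$). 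Hence $J(u,v)\le\liminf_nJ(u_n,v_n)=\lim_nI(u_n,v_n)=m$. Passing to the limit in $\mathcal G(u_n,v_n)=0$, using the convergence of the coupling term and weak lower semicontinuity of the rest, gives $\mathcal G(u,v)\le0$; and $\mathcal G(u,v)<0$ is impossible, for then (since $u\neq0$, $v\neq0$) Lemma~\ref{le2.2} furnishes the unique $\bar t\in(0,1)$ with $(u_{\bar t},v_{\bar t})\in\mathcal M$, giving $m\le I(u_{\bar t},v_{\bar t})=J(u_{\bar t},v_{\bar t})<J(u,v)\le m$, a contradiction. Therefore $\mathcal G(u,v)=0$, i.e. $(u,v)\in\mathcal M$, and consequently $m\le I(u,v)=J(u,v)\le m$, so $I(u,v)=m$ and $m$ is attained.
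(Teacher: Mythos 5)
Your endgame (the functional $J=I-\tfrac{1}{N+\alpha+\beta}\mathcal{G}$, its weak lower semicontinuity, the strict monotonicity of $t\mapsto J(u_t,v_t)$ via $(A_3)$, and the exclusion of $\mathcal{G}(u,v)<0$ by scaling down onto $\mathcal{M}$) is correct and is essentially the paper's ``Case 1''. The genuine gap is exactly the step you yourself flag: the strong convergence $\int_{\mathbb R^N}|u_n-u|^\alpha|v_n-v|^\beta\to0$. Your sketch for it does not go through as stated. The sequence $\{(u_n,v_n)\}$ is only a minimizing sequence on $\mathcal{M}$, not a Palais--Smale sequence, so a non-vanishing escaping remainder $(\xi_n,\gamma_n)=(u_n-u,v_n-v)$ cannot be passed to a limiting \emph{equation}, and the assertion that it ``accounts for at least $m_\infty$'' would require projecting it onto the Nehari--Poho\v{z}aev manifold of the autonomous problem with a scaling parameter $t_n\le1$; that in turn needs $\limsup_n\mathcal{G}_\infty(\xi_n,\gamma_n)\le0$, i.e.\ control of the sign of $\mathcal{G}(u,v)$ --- information you only obtain \emph{after} assuming the strong convergence, so the argument is circular as organized. (Two smaller points of the same nature: $m_\infty$ must be the $G_s$-symmetric level of the limit problem, since the minimax in Lemma~\ref{le2.2} runs over $\mathcal{V}$, not all of $X$; and applying Lemma~\ref{le2.70} separately to $u_n$ and $v_n$ yields two families of concentration points that may drift apart, so a single $z$-translation does not give $u\neq0$ and $v\neq0$ simultaneously --- in your scheme $v\neq0$ only comes out a posteriori, from the very convergence you are trying to prove.)

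The paper avoids the problem at infinity altogether, and this is the device you are missing: after nonvanishing and $u\neq0$, argue by trichotomy on $\mathcal{G}(u,v)$. If $\mathcal{G}(u,v)>0$, the Br\'ezis--Lieb-type splitting of $\mathcal{G}$ (Lemma~\ref{le3.2} for the coupling term, Lemma~\ref{le33} and $(A_1)$--$(A_2)$ for the rest) gives $\limsup_n\mathcal{G}(\xi_n,\gamma_n)<0$, so the remainders can be scaled onto $\mathcal{M}$ \emph{itself} (same potential $A(x)$, no $m_\infty$) with parameters $t_n$ bounded away from $1$; inserting this into $m+o_n(1)=J(u_n,v_n)$ and keeping the strictly positive contribution of the nonzero weak limit yields $m+o_n(1)>I\bigl((\xi_n)_{t_n},(\gamma_n)_{t_n}\bigr)+c\ge m+c$ with $c>0$, a contradiction. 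Together with your Case-1 argument this forces $\mathcal{G}(u,v)=0$, and then Fatou/weak lower semicontinuity gives $I(u,v)=m$ directly --- no strict energy inequality, no limiting equation for the bubble, and no prior strong convergence of the coupling term are needed. If you replace your contradiction argument by this projection of the remainder, the rest of your proof stands; the boundedness step is fine in spirit, but the Gagliardo--Nirenberg interpolation should be written out (for $\alpha+\beta>\frac{2N}{N-2}$ one must interpolate through $|\nabla(u_n^2)|_2$, using $\int u_n^2|\nabla u_n|^2$), since ``$\alpha+\beta>\alpha+1$'' by itself is not a justification.
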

\begin{proof}
Let $\{(u_{n},v_{n})\}\subset \mathcal{M}$ be a sequence such that $I(u_{n},v_{n})\rightarrow m$. Using $(u_{n},v_{n})\subset \mathcal{M}$ and $(A_{2})$, we may obtain
$$\aligned  1+m\geq&\ I(u_{n}  ,v_{n} )\\
=&\ \frac{\alpha+\beta}{2(N+\alpha+\beta
)}\int_{\mathbb R^{N}}(|\nabla u_{n}|^{2}+ |\nabla v_{n}|^{2})\\
&\ + \frac{\alpha+\beta-2}{2(N+\alpha+\beta)}\int_{\mathbb R^{N}}(Bv_{n}^{2}+u_{n}^{2}|\nabla u_{n}|^{2}+ v_{n}^{2}|\nabla v_{n}|^{2})\\
&\  +\frac{1}{2(N+\alpha+\beta)}\int_{\mathbb R^{N}}\bigl((\alpha+\beta-2)A(x)-\nabla A(x)\cdot x\bigr)u_{n}^{2},\endaligned$$
which implies that $\{u _{n }\}$, $\{v_{ n} \}$, $\{u ^{2}_{n} \}$ and 
$\{v^{2}_{n} \}$ are bounded in $H^{1}( \mathbb R^{ N} )$, then, 
there exists a subsequence of  $(u_{n},v_{n})$, still denoted by 
 $(u_{n},v_{n})$ such that $(u_{n},v_{n})\rightharpoonup (u,v)$ in $X$. 
 Then $\{u_{n}\}$ and $\{v_{n}\}$ are 
 bounded in $L^{\alpha+\beta}(\mathbb R^{N})$. 
 The proof includes the following three steps.

{\bf Step 1.} $\int_{\mathbb R^{N}}|u_{n}|^{\alpha}|v_{n}|^{\beta}\not\rightarrow0$. It follows from Lemma \ref{le2.3} that
$$ \aligned I(u_{n},v_{n})&=\int_{\mathbb R^{N}}\bigg(\frac{1}{2}(|\nabla u_{n}|^{2}+|\nabla v_{n}|^{2}+A(x)u_{n}^{2}+Bv_{n}^{2}\\
&\ \ \ \ +u_{n}^{2}|\nabla u_{n}|^{2}+v_{n}^{2}|\nabla v_{n}|^{2})-\frac{2}{\alpha+\beta} |u_{n}|^{\alpha}|v_{n}|^{\beta}\bigg)\\
&\rightarrow m>0,\endaligned$$
then $\int_{\mathbb R^{N}}(|\nabla u_{n}|^{2}+|\nabla v_{n}|^{2}+A(x)u_{n}^{2}+Bv_{n}^{2}+u_{n}^{2}|\nabla u_{n}|^{2}+v_{n}^{2}|\nabla v_{n}|^{2})\not\rightarrow0.$
 By Lemma \ref{le2.2}, for $t > 1$,
$$\aligned m&\leftarrow I(u_{n},v_{n})\\
&\geq I\bigl((u_{n})_{t},(v_{n})_{t}\bigr)\\
&=\frac{t^{N}}{2}\int_{\mathbb R^{N}}(|\nabla u_{n}|^{2}+|\nabla v_{n}|^{2})+\frac{t^{N+2}}{2}\int_{\mathbb R^{N}}(A(tx)u_{n}^{2}+Bv_{n}^{2})\\
&\ \ \ +\frac{t^{N+2}}{2}\int_{\mathbb R^{N}}(u_{n}^{2}|\nabla u_{n}|^{2}+v_{n}^{2}|\nabla v_{n}|^{2}) -\frac{2t^{N+\alpha+\beta}}{\alpha+\beta}\int_{\mathbb R^{N}}|u_{n}|^{\alpha}|v_{n}|^{\beta}\\
&\geq \frac{t^{N}}{2}\int_{\mathbb R^{N}}(|\nabla u_{n}|^{2}+|\nabla v_{n}|^{2}+A_{0}u_{n}^{2} +Bv_{n}^{2}+u_{n}^{2}|\nabla u_{n}|^{2}+v_{n}^{2}|\nabla v_{n}|^{2})\\
&\ \ \ -\frac{2t^{N+\alpha+\beta}}{\alpha+\beta}\int_{\mathbb R^{N}}|u_{n}|^{\alpha}|v_{n}|^{\beta}\\
&\geq \frac{t^{N}}{2}\delta-\frac{2t^{N+\alpha+\beta}}{\alpha+\beta}\int_{\mathbb R^{N}}|u_{n}|^{\alpha}|v_{n}|^{\beta},\endaligned$$
where $\delta$ is a fixed constant. It suffices to choose $t > 1$ so that $ \frac{t^{N}\delta}{2}>2m$ to get a lower bound for $\int_{\mathbb R^{N}}|u_{n}|^{\alpha}|v_{n}|^{\beta}$.

Therefore, we may assume (passing to a subsequence, if necessary)  that
\begin{equation}\label{eq31}
\int_{\mathbb R^{N}}|u_{n}|^{\alpha}|v_{n}|^{\beta}\rightarrow D\in(0,\infty).
\end{equation}

{\bf Step 2.} $u\neq0$. By using (\ref{eq31}) and H\"{o}lder inequality, we can assume (passing to a subsequence, if necessary) that
$$
\int_{\mathbb R^{N}}|u_{n}|^{\alpha+\beta}>\delta>0.$$
By Lemma \ref{le2.70}, there exist $\delta > 0$ and $\{z_{n}\}\subset \mathbb R$ such that
\begin{equation}\label{eq032}
\limsup_{n\rightarrow+\infty}\int_{B((0,z_{n}),r)}|u_{n}|^{\alpha+\beta}>\delta>0.
\end{equation}
Define
$$y=(x_{1},x_{2}),\ \ z=(x_{3},\cdots,x_{N}),$$
and
$$w_{n}(x)=w_{n}(y,z)=u_{n}(y,z+z_{n}),\ \sigma_{n}(x)=\sigma_{n}(y,z)=v_{n}(y,z+z_{n}),$$ then $w_{n}\rightharpoonup w,\sigma_{n}\rightharpoonup \sigma$ in $X$.  In this case, by $(A_{4})$, we may obtain $I(u_{n},v_{n})=I(w_{n},\sigma_{n})$. By using (\ref{eq032}) and $w_{n}\rightarrow w$ in $L_{loc}^{\alpha+\beta}(\mathbb R^{N})$, one has
$$\aligned 0<\delta&<\limsup_{n\rightarrow+\infty}\int_{B((0,z_{n}),r)}|u_{n}|^{\alpha+\beta}\\
&=\limsup_{n\rightarrow+\infty}\int_{B((0,0),r)}|w_{n}|^{\alpha+\beta}\\
&=\int_{B((0,0),r)}|w|^{\alpha+\beta},\endaligned$$
which implies $w \neq 0$, and then $u\neq 0$.

{\bf Step 2.} We claim that $(u,v)\in \mathcal{M}.$ Indeed, if $(u,v)\not\in \mathcal{M},$ we discuss three cases:

{\bf Case 1:} $\mathcal{G}(u,v)<0$. By Lemma \ref{le2.2}, there exists $t\in(0,1)$ such that $(u _{t},v_{t})\in \mathcal{M},$ it follows from $(A_{2})$, $(u_{n},v_{n})\in \mathcal{M}$ and Fatou's lemma that
$$\aligned m&=\liminf_{n\rightarrow+\infty}\bigg(I( u_{n} ,v_{n} )-\frac{1}{N+\alpha+\beta}\mathcal{G}(u_{n} ,v_{n})\bigg)\\
&=\liminf_{n\rightarrow+\infty} \bigg(\frac{\alpha+\beta}{2(N+\alpha+\beta)}\int_{\mathbb R^{N}}(|\nabla u_{n}|^{2}+ |\nabla v_{n}|^{2})\\
&\ \ \ \ +\frac{1}{2(N+\alpha+\beta)}\int_{\mathbb R^{N}}\bigl((\alpha+\beta-2)A(x)-\nabla A(x)\cdot x\bigr)u_{n}^{2}\\
&\ \ \ \ +\frac{\alpha+\beta-2}{2(N+\alpha+\beta)}\int_{\mathbb R^{N}}(Bv_{n}^{2}+u_{n}^{2}|\nabla u_{n}|^{2}+v_{n}^{2}|\nabla v_{n}|^{2})\bigg)\\
&\geq \frac{\alpha+\beta}{2(N+\alpha+\beta)}\int_{\mathbb R^{N}}(|\nabla u|^{2}+ |\nabla v|^{2})\\
&\ \ \ \ +\frac{1}{2(N+\alpha+\beta)}\int_{\mathbb R^{N}}\bigl((\alpha+\beta-2)A(x)-\nabla A(x)\cdot x\bigr)u^{2}\\
&\ \ \ \ +\frac{\alpha+\beta-2}{2(N+\alpha+\beta)}\int_{\mathbb R^{N}}(Bv^{2}+u^{2}|\nabla u|^{2}+v^{2}|\nabla v|^{2})\endaligned$$
$$\aligned &>\frac{\alpha+\beta}{2(N+\alpha+\beta)}t^{N}\int_{\mathbb R^{N}}(|\nabla u|^{2}+ |\nabla v|^{2})\\
&\ \ \ \ +\frac{t^{N+2}}{2(N+\alpha+\beta)}\int_{\mathbb R^{N}}\bigl((\alpha+\beta-2)A(x)-\nabla A(x)\cdot x\bigr)u^{2}\\
&\ \ \ \ +\frac{\alpha+\beta-2}{2(N+\alpha+\beta)}t^{N+\alpha+\beta}\int_{\mathbb R^{N}}(Bv^{2}+u^{2}|\nabla u|^{2}+v^{2}|\nabla v|^{2})\\
&=I(u_{t},v_{t})-\frac{1}{N+\alpha+\beta}\mathcal{G}(u_{t} ,v_{t})\\
&\geq m, \endaligned$$
which is a contradiction.
\vskip4pt
{\bf Case 2:} $\mathcal{G}(u,v)>0$. Set $\xi_{n}:=u_{n}-u,\gamma_{n}:=v_{n}-v$, by Lemma \ref{le3.2}, Br\'{e}zis-Lieb lemma \cite{bli}, (\ref{eq312}), $(A_{1})$ and $(B_{1})$, we may obtain
\begin{equation}\label{eq3.7}
\mathcal{G}(u_{n},v_{n})\geq \mathcal{G}(u,v)+\mathcal{G}(\xi_{n},\gamma_{n})+o_{n}(1).
\end{equation}
Then $\limsup\limits_{n\rightarrow\infty}\mathcal{G}(\xi_{n},\gamma_{n})<0$.  By Lemma \ref{le2.2}, there exists $t_{n}\in(0,1)$ such that $((\xi_{n})_{t_{n}}, (\gamma_{n})_{t_{n}})\in \mathcal{M}.$ Furthermore, one has that $\limsup\limits_{n\rightarrow\infty}t_{n}<1$, otherwise, along a subsequence, $t_{n}\rightarrow1$ and hence
$$\mathcal{G}(\xi_{n},\gamma_{n})=\mathcal{G}((\xi_{n})_{t_{n}}, (\gamma_{n})_{t_{n}})+o_{n}(1) = o_{n}(1),$$
a contradiction. It follows from $(u_{n},v_{n})\in \mathcal{M} $,   (\ref{eq3.7}), $(A_{2}) $  that
$$\aligned &\ m+o_{n}(1)\\
=&\ I( u_{n} ,v_{n} )-\frac{1}{N+\alpha+\beta}\mathcal{G}(u_{n} ,v_{n})\\
=&\ \frac{\alpha+\beta}{2(N+\alpha+\beta)}\int_{\mathbb R^{N}}(|\nabla u_{n}|^{2}+|\nabla v_{n}|^{2})\\
&\ +\frac{1}{2(N+\alpha+\beta)}\int_{\mathbb R^{N}}\bigl((\alpha+\beta-2)A(x)-\nabla A(x)\cdot x\bigr)u_{n}^{2}\\
&\ +\frac{\alpha+\beta-2}{2(N+\alpha+\beta)}\int_{\mathbb R^{N}}(Bv_{n}^{2}+u_{n}^{2}|\nabla u_{n}|^{2}+v_{n}^{2}|\nabla v_{n}|^{2})\\
\geq&\ \frac{\alpha+\beta}{2(N+\alpha+\beta)}\int_{\mathbb R^{N}}(|\nabla u |^{2}+|\nabla v |^{2}+|\nabla \xi_{n}|^{2}+|\nabla \gamma_{n}|^{2})\\
&\ +\frac{1}{2(N+\alpha+\beta)}\int_{\mathbb R^{N}}\bigl((\alpha+\beta-2)A(x)-\nabla A(x)\cdot x\bigr)(u ^{2}+\xi_{n}^{2})\\
&\ +\frac{\alpha+\beta-2}{2(N+\alpha+\beta)}\int_{\mathbb R^{N}}(Bv ^{2}+u^{2}|\nabla u |^{2}+v ^{2}|\nabla v |^{2}+\gamma_{n}^{2}+\xi_{n}^{2}|\nabla \xi_{n}|^{2}\\
&\ +\gamma_{n}^{2}|\nabla \gamma_{n}|^{2})\\
>&\ \frac{\alpha+\beta}{2(N+\alpha+\beta)}\int_{\mathbb R^{N}}(|\nabla u |^{2}+|\nabla v |^{2}+|\nabla \xi_{n}|^{2}+|\nabla \gamma_{n}|^{2})\\
&\ +\frac{1}{2(N+\alpha+\beta)}\int_{\mathbb R^{N}}\bigl((\alpha+\beta-2)A(x)-\nabla A(x)\cdot x\bigr)(u ^{2}+t_n^{N+2}\xi_{n}^{2})\\
&\ +\frac{\alpha+\beta-2}{2(N+\alpha+\beta)}\int_{\mathbb R^{N}}(Bv ^{2}+u^{2}|\nabla u |^{2}+v ^{2}|\nabla v |^{2}+t_n^{N+2}\gamma_{n}^{2}\\
&\ +t_n^{N+2}\xi_{n}^{2}|\nabla \xi_{n}|^{2}+t_n^{N+2}\gamma_{n}^{2}|\nabla \gamma_{n}|^{2})\endaligned$$
$$\aligned=&\ I\bigl((\xi_{n})_{t_n}, (\gamma_{n})_{t_n}\bigr)+\frac{\alpha+\beta}{2(N+\alpha+\beta)}\int_{\mathbb R^{N}}(|\nabla u|^{2}+|\nabla v|^{2})\\
&\ +\frac{1}{2(N+\alpha+\beta)}\int_{\mathbb R^{N}}\bigl((\alpha+\beta-2)A(x) -\nabla A(x)\cdot x\bigr)u^{2}\\
&\ +\frac{\alpha+\beta-2}{2(N+\alpha+\beta)}\int_{\mathbb R^{N}}(Bv^{2}+u^{2}|\nabla u|^{2}+v^{2}|\nabla v|^{2})\\
\geq&\ m , \endaligned$$
which is also a contradiction.

Therefore, $(u,v)\in \mathcal{M}$.  By using  Lebesgue dominated convergence theorem, Fatou's Lemma, $(A_{2})$  and $(u_{n},v_{n})\in \mathcal{M} $, we may get
$$\aligned m&=I(u,v)-\frac{1}{N+\alpha+\beta}\mathcal{G}(u,v)\\
&=\frac{\alpha+\beta}{2(N+\alpha+\beta)}\int_{\mathbb R^{N}}(|\nabla u |^{2}+|\nabla v |^{2})\\
&\ \ \ \ +\frac{1}{2(N+\alpha+\beta)}\int_{\mathbb R^{N}}\bigl((\alpha+\beta-2)A(x)-\nabla A(x)\cdot x\bigr)u ^{2}\\
&\ \ \ \ +\frac{\alpha+\beta-2}{2(N+\alpha+\beta)}\int_{\mathbb R^{N}}(Bv^{2}+u^{2}|\nabla u |^{2}+v ^{2}|\nabla v |^{2})\\
&\leq \liminf_{n\rightarrow+\infty}\bigg(\frac{\alpha+\beta}{2(N+\alpha+\beta)}\int_{\mathbb R^{N}}(|\nabla u_{n} |^{2}+|\nabla v_{n} |^{2})\\
&\ \ \ \ +\frac{1}{2(N+\alpha+\beta)}\int_{\mathbb R^{N}}\bigl((\alpha+\beta-2)A(x)-\nabla A(x)\cdot x\bigr)u_{n} ^{2}\\
&\ \ \ \ +\frac{\alpha+\beta-2}{2(N+\alpha+\beta)}\int_{\mathbb R^{N}}(Bv_{n}^{2}+u_{n}^{2}|\nabla u_{n} |^{2}+v_{n} ^{2}|\nabla v_{n} |^{2})\bigg)\\
&=\liminf_{n\rightarrow+\infty}\bigg(I( u_{n} ,v_{n} )-\frac{1}{N+\alpha+\beta}\mathcal{G}(u_{n},v_{n})\bigg)\\
&=m,\endaligned$$
which implies that $(u_{n},v_{n})\rightarrow (u,v)$  in $X$ and $I(u,v)=m$.

Having a minimum of  $I|_{\mathcal{M}}$, the fact that it is indeed a solution of  (\ref{eq1.1}), is based on a general idea used in \cite[Lemma 2.5]{lww}.
\vskip3.73pt
\noindent{\bf Proof of Theorem \ref{th1.1}.} Let 
$(\tilde{u},\tilde{v})\in \mathcal{M}$  be a minimizer of the functional 
$I|_{\mathcal{M}}$. We show that $I'(\tilde{u},\tilde{v})=0$. 
By using Lemma \ref{le2.2}, 
$$I(\tilde{u},\tilde{v})=\inf_{(u,v)\in X }\max\limits_{t>0}I(u_{t},v_{t})=m.$$
Now, we argue by contradiction by assuming that $(\tilde{u},\ \tilde{v})$ is not a weak\\

\noindent solution of (\ref{eq1.1}). Then, we can choose $\phi_{1},\ \phi_{2}\in C_{0}^{\infty}(\mathbb R^{N})\cap \mathcal{V}$  such that
$$\aligned &\ \langle I'(\tilde{u},\tilde{v}),(\phi_{1},\phi_{2})\rangle\\
=& \int_{\mathbb R^{N}}\bigg( \nabla \tilde{u}\nabla \phi_{1}+ \nabla \tilde{v}\nabla \phi_{2}+\nabla(\tilde{u}^{2})\nabla(\tilde{u}\phi_{1})+\nabla(\tilde{v}^{2})\nabla(\tilde{v}\phi_{2}) \\
&\ +A(x)\tilde{u}\phi_{1}+B\tilde{v}\phi_{2}-\frac{2\alpha}{\alpha+\beta}|\tilde{u}|^{\alpha-2}\tilde{u}|\tilde{v}|^{\beta}\phi_{1}-\frac{2\beta}{\alpha+\beta}|\tilde{v}|^{\beta-2}\tilde{v}|\tilde{u}|^{\alpha}\phi_{2}\bigg)\\
<&\ -1.\endaligned$$
Then we fix $\varepsilon> 0$ sufficiently small such that
$$\bigl\langle I' (\tilde{u}_{t}+\sigma\phi_{1},\tilde{v}_{t}+\sigma\phi_{2}),(\phi_{1},\phi_{2}) \bigr\rangle\leq -\frac{1}{2},\ \forall\ |t-1|,|\sigma|\leq\varepsilon $$
and introduce a cut-off function $0\leq \zeta\leq1 $ such that $\zeta(t)=1 $ for $|t-1|\leq\frac{\varepsilon}{2}$ and $\zeta(t)=0 $ for $|t-1|\geq\varepsilon. $
For $ t\geq0$, we define
$$\gamma_{1}(t):=\left\{\aligned &\tilde{u}_{t}, &\hbox{if}\ |t-1|\geq\varepsilon,\\
&\tilde{u}_{t}+\varepsilon\zeta(t)\phi_{1}, &\hbox{if}\ |t-1|<\varepsilon,\endaligned\right. $$
$$\gamma_{2}(t):=\left\{\aligned &\tilde{v}_{t}, &\hbox{if}\ |t-1|\geq\varepsilon,\\
&\tilde{v}_{t}+\varepsilon\zeta(t)\phi_{2}, &\hbox{if}\ |t-1|<\varepsilon.\endaligned\right. $$
Note that $ \gamma_{1}(t)$ and $\gamma_{2}(t)$ are continuous curves in the metric space $ (X,d)$  and, eventually choosing a smaller $ \varepsilon$, we get that  for  $|t-1|<\varepsilon$,
$$d_{X}\bigl((\gamma_{1}(t),\gamma_{2}(t)),(0,0)\bigr)>0.$$
\vskip4pt
{\bf Claim: } $\sup\limits_{t\geq0}I(\gamma_{1}(t),\gamma_{2}(t))<m.$

Indeed, if  $|t-1|\geq\varepsilon $, then $I(\gamma_{1}(t),\gamma_{2}(t))=I(\tilde{u}_{t},\tilde{v}_{t})<I(u,v)=m$. If $ |t-1|<\varepsilon$, by using the mean value theorem to the $C^{1}$  map $[0,\varepsilon]\ni \sigma\mapsto I(\tilde{u}_{t}+\sigma\zeta(t)\phi_{1},\tilde{v}_{t}+\sigma\zeta(t)\phi_{2})\in\mathbb R$, we find, for a suitable $\bar{\sigma}\in(0,\varepsilon)$,
$$\aligned &\ I(\tilde{u}_{t}+\sigma\zeta(t)\phi_{1},\tilde{v}_{t}+\sigma\zeta(t)\phi_{2})\\
=&\ I(\tilde{u}_{t},\tilde{v}_{t})+\langle I'(\tilde{u}_{t}+\bar{\sigma}\zeta(t)\phi_{1},\tilde{v}_{t}+\bar{\sigma}\zeta(t)\phi_{2}),(\zeta(t)\phi_{1},\zeta(t)\phi_{2})\rangle\\
\leq&\ I(\tilde{u}_{t},\tilde{v}_{t})-\frac{1}{2}\zeta(t)\\
<&\ m.\endaligned$$
To conclude, we observe that $\mathcal{G}(\gamma_{1}(1-\varepsilon),\gamma_{2}(1-\varepsilon))>0$ and $\mathcal{G}(\gamma_{1}(1+\varepsilon),\gamma_{2}(1+\varepsilon))<0$.  By the continuity of the map $t\mapsto \mathcal{G}(\gamma_{1}(t),\gamma_{2}(t))$  there exists $t_{0}\in(1-\varepsilon,1+\varepsilon) $  such that $\mathcal{G}(\gamma_{1}(t_{0}),\gamma_{2}(t_{0}))=0$. Namely,
$$ (\gamma_{1}(t_{0}),\gamma_{2}(t_{0}))=(\tilde{u}_{t_{0}}+\varepsilon\zeta(t_{0})\phi_{1},\tilde{v}_{t_{0}}+\varepsilon\zeta(t_{0})\phi_{2})\in \mathcal{M}$$  and $I(\gamma_{1}(t_{0}),\gamma_{2}(t_{0}))<m$, this is a contradiction.

In addition, from the definition of $\mathcal{V}$ and the fact that 
$\det(\eta)=-1, (u(\eta x),v(\eta x))
=(\det(\eta)u(x),\det(\eta)v(x))=(-u(x),-v(x)).$ So $(u,v)$ will 
change sign when $(y_{ 1 },y_{ 2})$ cross perpendicularly the half lines 
$y_{ 2}=\pm y_{ 1} \hbox{tan}\bigl(\frac{\pi}{j}\bigr)(y_{1}\geq0), j = 1,2,\ldots,s.$ Hence $(u,v)$ is a nodal solution with at least $2s$ nodal domains.
\end{proof}
\vskip10pt
\noindent{\bf Acknowledgments}
\vskip16pt
The authors would like to gratefully acknowledge support by National Natural Science Foundation of China 
(No. 11871152)  and Key Project of Natural Science Foundation of Fujian (No. 2020J02035).
We thank wish to thank the anonymous 
referee vrery much for the careful reading and valuable comments.
\vskip4pt


\begin{thebibliography}{99}
\small

\bibitem{bmmlb} H. Brandi, C. Manus, G. Mainfray, T. Lehner, G. Bonnaud, {\it Relativistic and ponderomotive self-focusing of a laser beam in a radially inhomogeneous plasma. I: paraxial approximation}, Phys. Fluids B., 5 (1993) 3539-3550.

\bibitem{bli} H. Brezis, E. Lieb, {\it A relation betweenn pointwise convergence of functions and convergence offunctionals}, Proc. Amer. Math. Soc., 88 (1983) 486-490.

\bibitem{bl} L. Br\"{u}ll, H. Lange, {\it  Solitary waves for quasilinear Schr\"{o}dinger equations}, Expos. Math., 4 (1986)  278-288.

\bibitem{cyh} Y. Chen, {\it Multiplicity of Nodal Solutions for a Class of $p$-Laplacian equations in $\mathbb {R}^{N}$}, Commun. Math. Anal., 12 (2012) 120-136.

\bibitem{cs} X. Chen, R. Sudan, {\it Necessary and sufficient conditions for self-focusing of short ultraintense laser pulse in underdense plasma}, Phys Rev Lett., 70 (1993) 2082-2085.

\bibitem{ctc} J. Chen, X. Tang, B. Cheng, {\it  Existence of ground state solutions for quasilinear Schr\"{o}dinger equations with super-quadratic condition}, Appl. Math. Lett., 79 (2018) 27-33.

\bibitem{cl} C. Chu, H. Liu, {\it Existence of positive solutions for a quasilinear Schr\"{o}dinger equation}, Nonlinear Anal. Real World Appl., 44 (2018) 118-127.

\bibitem{dpw} Y. Deng, S. Peng, J. Wang, {\it Nodal soliton solutions for quasilinear Schr\"{o}dinger equations with critical exponent}, J. Math. Phys., 54 (2013) 011504.

\bibitem{jlw}  L. Jeanjean, T. Luo, Z. Wang, {\it Multiple normalized solutions for quasilinear Schr\"{o}dinger equations}, J. Differential Equations, 259 (2015) 3894-3928.

\bibitem{k} S. Kurihura, {\it Large-amplitude quasi-solitons in superfluid films}, J. Phys. Soc. Japan 50 (1981) 3262-3267.

\bibitem{lss} E. Laedke, K. Spatschek, L. Stenflo, {\it Evolution theorem for a class of perturbed envelope soliton solutions}, J. Math. Phys., 24 (1983) 2764-2769.

\bibitem{ltz} H. Lange, B. Toomire, P. Zweifel, {\it Time-dependent dissipation in nonlinear Schr\"{o}dinger systems}, J. Math. Phys., 36 (1995) 1274-1283.

\bibitem{lw} J. Liu, Z. Wang, {\it Soliton solutions for quasilinear Schr\"{o}dinger equations  I}, Proc. Am. Math. Soc., 131 (2003) 441-448.

\bibitem{lww} J. Liu, Y. Wang, Z. Wang, {\it Solutions for quasilinear Schr\"{o}dinger equations via the Nehari method}, Comm. Partial Differential Equations, 29 (2004) 879-901.

\bibitem{mf} V. Makhankov, V. Fedyanin, {\it Non-linear effects in quasi-one-dimensional models of condensed matter theory}, Phys. Rep., 104 (1984) 1-86.

\bibitem{m} A. Moameni, {\it Existence of soliton solutions for a quasilinear Schr\"{o}dinger equation involving critical exponent in $\mathbb R^{N}$}, J. Differential Equations, 229 (2006) 570-587.

\bibitem{ma} A. Moameni, {\it On a class of periodic quasilinear Schr\"{o}dinger equations involving critical growth in $\mathbb R^{2}$}, J. Math. Anal. Appl., 334 (2007) 775-786.

\bibitem{mj} V. Moroz, J. Schaftingen, {\it Ground states of nonlinear Choquard equations: Existence, qualitative properties and decay asymptotics}, J. Funct. Anal., 265 (2013) 153-184.

\bibitem{psw} M. Poppenberg, K. Schmitt, Z. Wang, {\it On the existence of soliton solutions to quasilinear Schr\"{o}dinger equations}, Calc. Var. Partial Differential Equations, 14 (2002) 329-344.

\bibitem{r} B. Ritchie, {\it Relativistic self-focusing and channel formation in laser-plasma interactions}, Phys. Rev. E.. 50 (1994) 687-689.

\bibitem{rs} D. Ruiz, G. Siciliano, {\it Existence of ground states for a modified nonlinear Schr\"{o}dinger equation}, Nonlinearity, 23 (2010) 1221-1233.

\bibitem{0s} M. Schott, {\it Station\"{a}re L\"{o}sungen quasilinearer Schr\"{o}dinger-Gleichungen}, Diploma Thesis Universit\`{e}t K\"{o}ln., 2002.

\bibitem{s} U. Severo, {\it Symmetric and nonsymmetric solutions for a class of quasilinear Schr\"{o}dinger equations}, Adv. Nonlinear Stud., 8 (2008) 375-389.

\bibitem{sw} A. Szulkin, S. Waliullah, {\it Sign-changing and symmetry-breaking solutions to singular problems}, Complex Var. Elliptic Edu., 57 (2012) 1191-1208.

\bibitem{wz} Y. Wang, W. Zou, {\it Bound states to critical quasilinear Schr\"{o}dinger equations}, NoDEA Nonlinear Differential Equations Appl., 19 (2012) 19-47.

\bibitem{w}  X. Wu, {\it Multiple solutions for quasilinear Schr\"{o}dinger equations with a parameter}, J. Differential Equations, 256 (2014) 2619-2632.

\bibitem{ww} K. Wu, X. Wu, {\it Radial solutions for quasilinear Schr\"{o}dinger equations without 4-superlinear condition}, Appl. Math. Lett., 76  (2018) 53-59.

\bibitem{zl} W. Zhang, X. Liu, {\it Infinitely many sign-changing solutions for a quasilinear elliptic equation in $\mathbb R^{ N}$}, J. Math. Anal. Appl., 427 (2015) 722-740.

\end{thebibliography}
\end{document}